\newtheorem{theorem}{Theorem}[section]
\newtheorem{definition}[theorem]{Definition}
\newtheorem{lemma}[theorem]{Lemma}
\newtheorem{prop}[theorem]{Proposition}
\newtheorem{corollary}[theorem]{Corollary}
\newtheorem{remark}[theorem]{Remark}
\newtheorem{ex}[theorem]{Example}
\newcommand{\hh}{{\mathbb{H}}}
\newcommand{\cc}{{\mathbb{C}}}
\newcommand{\rr}{{\mathbb{R}}}
\newcommand{\nn}{{\mathbb{N}}}
\newcommand{\s}{{\mathbb{S}}}
\newcommand{\z}{{\mathcal{Z}}}
\title{\bf The zero sets of slice regular functions and the open mapping theorem}
\author{ Graziano Gentili\footnote{Partially supported by GNSAGA of the INdAM and by PRIN ``Propriet\`a geometriche delle variet\`a reali e complesse'' of the MIUR.} \\ 
\normalsize Dipartimento di Matematica ``U. Dini'', Universit\`a di Firenze \\ 
\normalsize Viale Morgagni 67/A, 50134 Firenze, Italy,  gentili@math.unifi.it \\
\and Caterina Stoppato$^*$\footnote{Partially supported by PRIN ``Geometria differenziale e analisi globale'' of the MIUR.} \\ 
\normalsize Dipartimento di Matematica ``U. Dini'', Universit\`a di Firenze \\ 
\normalsize Viale Morgagni 67/A, 50134 Firenze, Italy,  stoppato@math.unifi.it\\}
\date{  }
\begin{document}
\maketitle


\begin{abstract}
A new class of regular quaternionic functions, defined by power series in a natural fashion, has been introduced in \cite{advances}. Several results of the theory recall the classical complex analysis, whereas other results reflect
the peculiarity of the quaternionic structure. The recent \cite{advancesrevised} identified a larger class of domains, on which the study of regular functions is most natural and not limited to the study of quaternionic power series. 
In the present paper we extend some basic results concerning the algebraic and topological properties of the zero set to regular functions defined on these domains.
We then use these results to prove the Maximum and Minimum Modulus Principles and a 
version of the Open Mapping Theorem in this new setting.
\end{abstract}


\section{Introduction}

Let $\hh$ be the real algebra of quaternions. Its elements are of the form $q=x_0+ix_1+jx_2+kx_3$ where the $x_n$ are real, and $i$, $j$, $k$, are imaginary units (i.e. their square equals $-1$) such that $ij=-ji=k$, $jk=-kj=i$, and $ki=-ik=j.$ The richness of the theory of holomorphic functions of one complex variable inspired the study of several interesting theories of quaternionic functions during the last century. The most famous was introduced by Fueter, \cite{fueter1, fueter2}, and excellently surveyed in \cite{sudbery}. For recent work on Fueter regularity, see, e.g., \cite{librodaniele, libroshapiro} and references therein.

A different notion of regularity for quaternionic functions, inspired by Cullen \cite{cullen}, has been proposed in \cite{cras, advances}. Several classical results in complex analysis have quaternionic analogs, proven in \cite{advances} and in the subsequent papers \cite{zeros, open, survey, rigidity, poli} for regular functions on open balls $B(0,R) = \{q \in \hh : |q| <R\}$. The recent \cite{advancesrevised} identified a larger class of domains that are the quaternionic
analogs of the (complex) domains of holomorphy. The study of regular functions is most natural
on these domains, and it does not reduce to the study of quaternionic power series. In the present paper we extend to regular functions defined on these domains the results proven in \cite{zeros, open}, which concern the (algebraic and) topological structure the zero sets and the openness of regular functions. We also extend the Maximum and Minimum Modulus Principles proven in \cite{advances} and \cite{open} respectively. 

Let us present the bases of the theory, beginning with some notation. Denote by $\s$ the two-dimensional sphere of quaternion imaginary units: $\s = \{q \in \hh : q^2 =-1\}$. For all imaginary unit $I \in \s$, let $L_I = \rr + I \rr$ be the complex line through $0, 1$ and $I$. The definition of regularity given in \cite{advancesrevised} follows.

\begin{definition}\label{definition}
Let $\Omega$ be a domain in $\hh$ and let $f : \Omega \to \hh$ be a function. For all $I \in \s$, we set $\Omega_I = \Omega \cap L_I$ and denote the restriction $f_{|_{\Omega_I}}$ by $f_I$. The restriction $f_I$ is called \textnormal{holomorphic} if $f_I \in C^1(\Omega_I)$ and if
\begin{equation}
\bar \partial_I f (x+Iy) = \frac{1}{2} \left( \frac{\partial}{\partial x}+I\frac{\partial}{\partial y} \right) f_I (x+Iy)
\end{equation}
vanishes identically. The function $f$ is called \emph{slice regular} (or simply \emph{regular}) if $f_I$ is holomorphic for all $I \in \s$.
\end{definition}

For instance, a quaternionic power series $\sum_{n \in \nn} q^n a_n$ with $a_n \in \hh$ defines a regular function in its domain of convergence, which proves to be an open ball $B(0,R) = \{q \in \hh : |q| <R\}$. Conversely, the following is proven in \cite{advancesrevised}.

\begin{theorem} 
Let $f : \Omega \to \hh$ be a regular function. If $p \in \Omega \cap \rr$ and if $B = B(p,R)$ is the largest open ball centered at $p$ and included in $\Omega$, then there exist quaternions $a_n \in \hh$ such that $f(q)=\sum_{n \in \nn} (q-p)^n a_n$ for all $q \in B$. In particular, $f \in C^{\infty}(B)$.
\end{theorem}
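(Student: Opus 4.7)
The plan is to use classical complex power series on each slice $L_I$ and then verify that the resulting coefficients do not depend on $I$. Fix an arbitrary $I \in \s$ and pick $J \in \s$ with $JI = -IJ$, so that $\hh = L_I \oplus L_I J$ as a real vector space. Writing $f_I = F + G J$ with $F, G : \Omega_I \to L_I$, the equation $\bar\partial_I f_I \equiv 0$ splits into $\bar\partial_I F = 0$ and $\bar\partial_I G = 0$ on the open subset $\Omega_I$ of $L_I \cong \cc$. Since $p \in \rr \subset L_I$ and the disk $B_I := B \cap L_I$ of radius $R$ centered at $p$ is contained in $\Omega_I$, ordinary complex analysis yields convergent expansions $F(z) = \sum_{n \in \nn} (z-p)^n \alpha_n^I$ and $G(z) = \sum_{n \in \nn} (z-p)^n \beta_n^I$ throughout $B_I$, with $\alpha_n^I, \beta_n^I \in L_I$. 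Setting $a_n^I := \alpha_n^I + \beta_n^I J \in \hh$ gives $f_I(z) = \sum_{n \in \nn} (z-p)^n a_n^I$ for every $z \in B_I$.

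The decisive step is to establish that $a_n^I$ is independent of $I$. This is precisely where the hypothesis $p \in \rr$ enters: the real interval $(p-R, p+R)$ lies in $B_I$ for every $I \in \s$, and the values $f(p+t)$ for real $t$ are intrinsic to $f$, not to any chosen $I$. Restricting the slice expansion to real arguments yields $f(p+t) = \sum_{n \in \nn} t^n a_n^I$ for $t \in (-R, R)$, so the $a_n^I$ can be read off as the Taylor coefficients at $0$ of the smooth quaternion-valued function $t \mapsto f(p+t)$. Since this function depends only on $f$, we obtain common coefficients $a_n := a_n^I$ for all $I \in \s$.

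Finally, any $q \in B \setminus \{p\}$ lies in some $L_I$ with $I \in \s$, and hence in $B_I$, so the slice expansion gives $f(q) = f_I(q) = \sum_{n \in \nn} (q-p)^n a_n$; the identity at $q = p$ reduces to $f(p) = a_0$. The conclusion $f \in C^{\infty}(B)$ then follows at once, since a convergent quaternionic power series is real-analytic on its ball of convergence. The main obstacle in this scheme is the $I$-independence of the coefficients; it is precisely the assumption $p \in \rr$ that makes the real axis through $p$ a common ground on which the slicewise expansions can be compared and matched.
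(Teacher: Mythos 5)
Your argument is correct. Note that the paper itself gives no proof of this statement (it is quoted from the reference for symmetric completions of slice domains), so there is nothing internal to compare against; but your route --- split $f_I=F+GJ$ into two $L_I$-valued holomorphic functions, expand each on the disk $B\cap L_I$ by classical complex analysis, and then identify the coefficients $a_n^I$ across different slices by restricting to the real interval $(p-R,p+R)$, where they are forced to equal $\frac{1}{n!}\frac{d^n}{dt^n}f(p+t)\big|_{t=0}$ --- is exactly the standard mechanism behind this result, and every step (openness of $\Omega_I$, convergence of the Taylor series on all of $B\cap L_I$, uniqueness of the coefficients of a convergent quaternion-valued real power series) checks out. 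The only point worth making explicit is that the splitting $f_I=F+GJ$ with $\bar\partial_I F=\bar\partial_I G=0$ requires no hypothesis on $\Omega$ beyond openness, since it is a purely algebraic decomposition $\hh=L_I\oplus L_I J$ compatible with the left action of $I$; you use it correctly.
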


Expanding a regular function at a non-real point $p \in \hh \setminus \rr$ is a much more delicate matter. A detailed study of quaternionic analyticity has been conducted in \cite{powerseries}. For the purpose of the present paper it suffices to know that choosing a ``bad'' domain of definition $\Omega$ may lead to regular functions that are not even continuous:

\begin{ex}
Let $\Omega$ be any neighborhood of $\s$ in $\hh$ which does not intersect the real axis (e.g. $\Omega = T(\s,r) = \{q \in \hh : d(q, \s) < r\}$ with $r<1/2$). Choose $I \in \s$ and define $f : \Omega \to \hh$ by setting $f_I = f_{-I} \equiv 1$, $f_J \equiv 0$ for all $J \in \s \setminus \{\pm I\}$. Since all constant functions are holomorphic, $f$ is regular according to Definition \ref{definition}. Clearly $f$ is not continuous at $I$ (nor at any point of $\Omega_I$).
\end{ex}

Such pathologies can be avoided by requiring the domain of definition to have certain topological and geometric properties. The first such condition is the following.

\begin{definition}
Let $\Omega$ be a domain in $\hh$, intersecting the real axis. If $\Omega_I = \Omega \cap L_I$ is a domain in $L_I \simeq \cc$ for all $I \in \s$ then we say that $\Omega$ is a \textnormal{slice domain}.
\end{definition}

The following result holds for regular functions on slice domains.

\begin{theorem}[Identity Principle] \label{identity}
Let $\Omega$ be a slice domain and let $f,g : \Omega \to \hh$ be regular. Suppose that $f$ and $g$ coincide on a subset $C$ of $\Omega_I$, for some $I \in \s$. If $C$ has an accumulation point in $\Omega_I$, then $f \equiv g$ in $\Omega$.
\end{theorem}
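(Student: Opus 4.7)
The plan is to reduce the quaternionic statement to the classical identity principle for holomorphic functions of one complex variable, applied slice by slice, and then to propagate the vanishing from one slice to all the others through the real axis, which the slice-domain hypothesis makes available.

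Let $h = f - g$, so that $h$ is regular on $\Omega$ and vanishes on $C \subset \Omega_I$. The first step is to show $h_I \equiv 0$ on $\Omega_I$. For this I would choose $J \in \s$ with $IJ = -JI$, so that $\hh$ decomposes as the real-linear direct sum $L_I \oplus L_I J$, and write $h_I = F + GJ$ with $F, G \colon \Omega_I \to L_I$. A direct computation, using that $I$ commutes with itself and $J$ sits on the right of $G$, gives
\[
\bar\partial_I h_I \;=\; (\bar\partial_I F) + (\bar\partial_I G)\, J,
\]
and since the summands lie in the independent subspaces $L_I$ and $L_I J$, the holomorphy of $h_I$ forces $\bar\partial_I F = 0 = \bar\partial_I G$. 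Identifying $L_I$ with $\cc$, the functions $F$ and $G$ are then classical holomorphic maps on the planar domain $\Omega_I$, both vanishing on $C$; by the classical identity principle and the accumulation hypothesis, $F \equiv 0 \equiv G$, and hence $h_I \equiv 0$ on $\Omega_I$.

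The second step uses the slice-domain structure to spread this vanishing. Since $\Omega$ is a slice domain, $\Omega \cap \rr$ is a nonempty open subset of $\rr$, and for every $K \in \s$ we have $\rr \subset L_K$, hence $\Omega \cap \rr \subset \Omega_K$. By Step~1, $h$ vanishes on $\Omega_I$ and in particular on the open set $\Omega \cap \rr$, which certainly has accumulation points in the connected planar domain $\Omega_K$. Applying the splitting argument of Step~1 to $h_K$ (with any choice of $K' \in \s$ anticommuting with $K$) yields $h_K \equiv 0$ on $\Omega_K$. Since $\Omega = \bigcup_{K \in \s} \Omega_K$, we conclude $h \equiv 0$ on $\Omega$, i.e.\ $f \equiv g$. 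The only genuinely non-trivial point is the splitting lemma underlying Step~1: verifying that the $\hh$-valued notion of holomorphy in Definition~\ref{definition} really does reduce to a pair of ordinary complex-holomorphic components on each slice. Once that is checked, everything else is connectedness of $\Omega_I$ and $\Omega_K$ combined with the complex identity principle.
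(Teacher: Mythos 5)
Your proof is correct, and it is essentially the standard argument: split $h_I=F+GJ$ into two $L_I$-valued holomorphic components, kill both by the classical identity principle on the connected slice $\Omega_I$, and then propagate through the nonempty open set $\Omega\cap\rr$ to every other slice $\Omega_K$. The paper itself states this theorem without proof (citing the earlier work where it is established), and that original proof proceeds exactly along the lines you describe, so there is nothing to add.
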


Another natural condition for  the domain of definition of a regular function is the following symmetry property.

\begin{definition}
A subset $C$ of $\hh$ is \textnormal{axially symmetric} if, for all $x+yI \in C$ with $x,y \in \rr, I \in \s$, the whole set $x+y\s = \{x+yJ : J \in \s\}$ is contained in $C$.
\end{definition}

For the sake of simplicity, we will call such a $C$ a \emph{symmetric} set.  It is worth to point out that symmetric slice domains play the role played by the domains of holomorphy in classical complex analysis, as proven in \cite{advancesrevised}:

\begin{theorem}[Extension Theorem]\label{circular extendability}
Let $\Omega \subseteq \mathbb{H}$ be a slice domain, and let $f: \Omega \to \mathbb{H}$ be a regular function. There exists a unique regular extension $\widetilde f: \widetilde \Omega \to \mathbb{H}$ of $f$ to the smaller symmetric slice domain  $\widetilde \Omega$ which contains $\Omega$.
\end{theorem}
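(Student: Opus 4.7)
The plan is to construct $\widetilde\Omega$ explicitly as the axially symmetric hull
$$\widetilde\Omega:=\bigcup\bigl\{x+y\s : x+yI\in\Omega\text{ for some }I\in\s\bigr\},$$
verify that it is a symmetric slice domain containing $\Omega$, and then extend $f$ sphere by sphere using the canonical affine form that a regular function must take on each sphere. Openness, axial symmetry, and the presence of real points are immediate from the construction; connectedness of each slice $\widetilde\Omega_J$ follows because any path in $\Omega_I$ from $x+yI$ to a real point transports, via the continuous map $u+vI\mapsto u+|v|J$, to a path in $\widetilde\Omega_J$ ending at the same real point. Minimality of $\widetilde\Omega$ among symmetric slice domains containing $\Omega$ is built into the definition.

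Uniqueness of the regular extension is immediate from the Identity Principle (Theorem~\ref{identity}): any two extensions coincide on the nonempty open interval $\Omega\cap\rr$, which is contained in $\widetilde\Omega_I$ and has accumulation points there for every $I\in\s$. For existence, I would exploit the power series expansion at each real point $p\in\Omega\cap\rr$: on the symmetric ball $B(p,R)\subseteq\Omega$ one has $f(q)=\sum_n(q-p)^na_n$, and the factorization $(x-p+yI)^n=A_n(x,y)+I\,B_n(x,y)$ with real polynomial coefficients $A_n,B_n$ yields the explicit real-analytic quaternionic ``stem functions'' $\alpha_p(x,y):=\sum_n A_n(x,y)\,a_n$ and $\beta_p(x,y):=\sum_n B_n(x,y)\,a_n$ satisfying $f(x+yI)=\alpha_p(x,y)+I\,\beta_p(x,y)$ throughout $B(p,R)$. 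To reach an arbitrary sphere $S=x+y\s\subseteq\widetilde\Omega$, pick $J\in\s$ with $x+yJ\in\Omega$, join $x+yJ$ to a real point of $\Omega\cap\rr$ by a path inside the connected domain $\Omega_J$, and analytically continue $(\alpha_p,\beta_p)$ along the projection of this path to the $(x,y)$-plane, using the holomorphy of $f_J$ on $\Omega_J$ to force agreement with $f$ at every intermediate sphere that meets $\Omega$. Setting $\widetilde f(x+yI):=\alpha(x,y)+I\,\beta(x,y)$ then produces the extension on all of $\widetilde\Omega$.

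The principal obstacle I anticipate is well-posedness of this procedure: showing that the resulting $(\alpha(x,y),\beta(x,y))$ does not depend on the choices of $p$, $J$, and the path, and that the assembled $\widetilde f$ is genuinely regular. Both points rest on the Identity Principle applied on each slice $\widetilde\Omega_J$: any two candidate extensions coming from different choices coincide on the open set $B(p,R)\cap L_J\subseteq\widetilde\Omega_J$ and hence on the whole slice. Regularity of $\widetilde f$ is then local, since on $\Omega$ it equals $f$ by construction and on each symmetric ball around a real point it is regular as the sum of a convergent power series; the Identity Principle forces these local regular pieces to glue into a single regular extension $\widetilde f:\widetilde\Omega\to\hh$.
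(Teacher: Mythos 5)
First, a caveat: the paper you are working from does not prove this theorem at all --- it is quoted from \cite{advancesrevised} --- so there is no internal proof to compare against. Your construction of $\widetilde\Omega$ as the axially symmetric hull, the verification that it is a symmetric slice domain (including the transport of paths via $u+vI\mapsto u+|v|J$ to connect each slice through the real points), its minimality, and the uniqueness of the extension via the Identity Principle are all correct. The representation $f(x+yI)=\alpha_p(x,y)+I\,\beta_p(x,y)$ on a symmetric ball $B(p,R)$ about a real point is also fine. The problem lies in the heart of the existence argument.

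The gap is the claim that $(\alpha_p,\beta_p)$ can be ``analytically continued along the projection of the path, using the holomorphy of $f_J$ on $\Omega_J$ to force agreement.'' Holomorphy of $f_J$ forces no such thing. Away from the real axis, the single equation $\alpha(x,y)+J\beta(x,y)=f_J(x+yJ)$ is one quaternionic condition on two quaternionic unknowns and does not determine $(\alpha,\beta)$ locally: its homogeneous solutions $(-J\beta,\beta)$, with $\beta$ satisfying the relevant conjugate Cauchy--Riemann system, form an infinite-dimensional space. Equivalently, setting $g(x-yJ):=\alpha(x,y)-J\beta(x,y)$, one checks that $g$ is holomorphic on the reflection of the domain of $(\alpha,\beta)$ and coincides with $f_J$ near the real axis; so continuing the stem function along your path amounts to analytically continuing $f_J$ along the \emph{reflected} path, which may exit $\Omega_J$ entirely. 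Nothing in your argument rules out that this continuation hits a singularity --- for a single holomorphic function on a plane domain $D$ meeting $\rr$, the analogous extension to $D\cup\overline{D}$ is simply false in general. What saves the quaternionic statement is that $\Omega$ is open in $\hh$, so every sphere $x+y\s$ through a non-real point of $\Omega$ meets $\Omega$ in an open subset of that sphere, and regularity of $f$ on the nearby slices $L_I$, $I\neq J$, supplies exactly the missing half of the stem function; making this precise (an affineness/representation formula valid on non-symmetric slice domains, plus an open-and-closed continuation argument over the spheres) is the actual content of the proof in \cite{advancesrevised}. Since your continuation step never invokes more than one slice at a time away from the real axis, the existence proof does not go through as written.
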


As proven in \cite{advancesrevised}, generalizing \cite{open}, the distribution of the values of a regular function on each sphere $x+y\s$ contained in its domain of definition is quite special:

\begin{theorem}\label{formula} 
Let $f$ be a regular function on a symmetric slice domain $\Omega\subseteq \mathbb{H}$. For each $x+y\s \subset \Omega$ there exist constants $b(x,y), c(x,y) \in \hh$ such that for all $I \in \s$
\begin{equation}
f(x+yI) = b(x,y) + I c(x,y).
\end{equation}
\end{theorem}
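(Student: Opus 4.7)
The plan is to build an auxiliary function directly from the putative right-hand side of the formula, show it is regular, and then force agreement with $f$ via the Identity Principle (Theorem~\ref{identity}). Fix any $I \in \s$. Because $\Omega$ is axially symmetric, $x+yI \in \Omega_I$ implies $x-yI=x+y(-I)\in \Omega_I$, so for every $(x,y) \in \rr^2$ with $x+y\s \subset \Omega$ I would set
\begin{equation*}
\tilde b(x,y) = \tfrac{1}{2}\bigl(f(x+yI)+f(x-yI)\bigr), \qquad \tilde c(x,y) = -\tfrac{I}{2}\bigl(f(x+yI)-f(x-yI)\bigr).
\end{equation*}
A direct check gives $\tilde b(x,y)+I\tilde c(x,y)=f(x+yI)$, that $\tilde b$ is even and $\tilde c$ is odd in $y$, and that $\tilde c(x,0)=0$.

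Next, define $F:\Omega\to\hh$ by $F(x+yJ)=\tilde b(x,y)+J\tilde c(x,y)$ for all $J\in\s$, with $F(x)=f(x)$ on $\Omega\cap\rr$. The parities of $\tilde b,\tilde c$ absorb the ambiguity $x+yJ=x+(-y)(-J)$, and the axial symmetry of $\Omega$ ensures $F$ is well-defined on all of $\Omega$. The main step is then to verify that $F$ is regular. Each $F_J$ lies in $C^1(\Omega_J)$ because it is built algebraically from $f_I\in C^1(\Omega_I)$, and a direct computation yields
\begin{equation*}
(\partial_x+J\partial_y)F_J(x+yJ)=(\partial_x\tilde b-\partial_y\tilde c)+J(\partial_x\tilde c+\partial_y\tilde b).
\end{equation*}
Using the identity $\partial_y-I\partial_x=-I(\partial_x+I\partial_y)$, the two bracketed coefficients reassemble, via the even/odd decomposition, into $\tfrac{1}{2}\bigl[(\partial_x+I\partial_y)f_I(x+yI)+(\partial_x+I\partial_y)f_I(x-yI)\bigr]$ and $-\tfrac{I}{2}\bigl[(\partial_x+I\partial_y)f_I(x+yI)-(\partial_x+I\partial_y)f_I(x-yI)\bigr]$, both of which vanish because $\bar\partial_I f_I\equiv 0$ on $\Omega_I$ and both $x\pm yI$ belong to $\Omega_I$.

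Once $F$ is shown regular on the slice domain $\Omega$, the proof closes quickly: $F_I\equiv f_I$ on $\Omega_I$ by construction, so by Theorem~\ref{identity} we have $F\equiv f$ on $\Omega$, establishing the formula with $b(x,y)=\tilde b(x,y)$ and $c(x,y)=\tilde c(x,y)$. The main obstacle I expect is the regularity check on slices $L_J$ with $J\neq\pm I$: the Cauchy--Riemann condition there must be deduced purely from the holomorphicity of $f$ on the single slice $L_I$, and the mechanism that makes the cancellation work is exactly the axial symmetry of $\Omega$, which provides simultaneous access to $f(x+yI)$ and to its conjugate evaluation $f(x-yI)$.
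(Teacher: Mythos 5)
Your proof is correct, and since the paper itself imports Theorem \ref{formula} from \cite{advancesrevised} without reproducing a proof, the natural comparison is with the Extension Lemma \ref{Extension Lemma}: your $\tilde b,\tilde c$ reproduce exactly the extension formula stated there, and your route --- define $F(x+yJ)=\tilde b+J\tilde c$, verify the slice-wise Cauchy--Riemann equations $\partial_x\tilde b=\partial_y\tilde c$ and $\partial_y\tilde b=-\partial_x\tilde c$ from the holomorphy of $f_I$ at the two points $x\pm yI$, then invoke the Identity Principle \ref{identity} --- is precisely the standard argument of the cited source. The only delicate points, namely well-definedness under the ambiguity $x+yJ=x+(-y)(-J)$ and the fact that the two Cauchy--Riemann coefficients do not depend on $J$ (so a single computation on the slice $L_I$ certifies holomorphy on every slice $L_J$), are both addressed in your write-up.
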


In other words, the function $\s \to \hh$ mapping $I \mapsto f(x+yI)$ is affine. This immediately implies the following result, proven in \cite{advancesrevised} extending \cite{advances}.

\begin{corollary}\label{affinezeros}
Let $f$ be a regular function on a symmetric slice domain $\Omega\subseteq \mathbb{H}$ and let $x+y\s \subset \Omega$. If there exist distinct $I,J \in \s$ such that $f(x+yI)=0=f(x+yJ)$ then $f \equiv 0$ in $x+y\s$.
\end{corollary}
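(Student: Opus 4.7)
The plan is to read off the conclusion directly from the affine structure of $f$ on the sphere $x+y\s$ provided by Theorem \ref{formula}. That result furnishes quaternions $b = b(x,y)$ and $c = c(x,y)$ such that
\begin{equation*}
f(x+yK) = b + Kc \quad \text{for every } K \in \s.
\end{equation*}
Substituting the two hypotheses $f(x+yI)=0$ and $f(x+yJ)=0$ yields the pair of equations $b+Ic = 0$ and $b+Jc = 0$.

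Subtracting these gives $(I-J)c = 0$. Since $I$ and $J$ are distinct, $I-J$ is a nonzero quaternion, hence invertible in the skew field $\hh$. This forces $c = 0$, and feeding $c=0$ back into either of the two equations yields $b = 0$. Consequently $f(x+yK) = b+Kc = 0$ for every $K \in \s$, which is precisely the claim that $f$ vanishes identically on $x+y\s$.

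There is essentially no obstacle here: the only subtlety is the use of the division-ring property of $\hh$ to pass from $(I-J)c = 0$ with $I-J \neq 0$ to $c = 0$. Everything else is a direct application of Theorem \ref{formula}, and the axial symmetry of $\Omega$ (which ensures $x+y\s \subset \Omega$ makes sense as a sphere entirely contained in the domain) is already built into the statement. No appeal to the Identity Principle (Theorem \ref{identity}) or to power series expansions is needed for the argument on a single sphere; those tools would only be required if one wanted to propagate the vanishing from the sphere to all of $\Omega$, which is not asked here.
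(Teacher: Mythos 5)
Your proof is correct and is precisely the ``immediate'' argument the paper has in mind: it states the corollary as a direct consequence of Theorem \ref{formula}, and the intended deduction is exactly your subtraction $(I-J)c=0$ followed by the observation that $\hh$ has no zero divisors. Nothing further is needed.
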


Furthermore, by direct computation 
$$b(x,y)=\frac{1}{2}\left[ f(x+yK)+f(x-yK)\right],\ c(x,y) = \frac{K}{2}\left[ f(x-yK)-f(x+yK)\right]$$
 for all $K\in\s$, so that $b,c$ are $C^\infty$ functions. Hence

\begin{corollary}
If $\Omega$ is a symmetric slice domain and $f$ is regular in $\Omega$, then $f \in C^{\infty}(\Omega)$.
\end{corollary}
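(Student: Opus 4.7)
The plan is to verify $f \in C^{\infty}$ locally at each point of $\Omega$, splitting into the cases of real and non-real points, since the parameterization underlying the formula of Theorem \ref{formula} degenerates on the real axis.

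For any real point $p \in \Omega \cap \rr$, the power-series theorem stated earlier in the introduction already yields an open ball $B(p,R) \subseteq \Omega$ on which $f$ expands as a convergent quaternionic power series and is, by the same theorem, of class $C^{\infty}$. So at real points there is nothing further to prove.

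At a non-real point $q_0 = x_0 + y_0 I_0 \in \Omega$ with $y_0 > 0$, axial symmetry of $\Omega$ places $x_0 + y_0 \s$ entirely in $\Omega$, and openness of $\Omega$ then supplies neighborhoods $U$ of $(x_0, y_0)$ in $\rr \times (0, \infty)$ and $V$ of $I_0$ in $\s$ with $x + yI \in \Omega$ for all $(x,y,I) \in U \times V$. Fixing any $K \in \s$ and invoking the formulas recorded just before the statement, the coefficients $b$ and $c$ involve only $f_K$, which is holomorphic and therefore $C^{\infty}$ on $\Omega_K$; this gives $b, c \in C^{\infty}(U)$. Theorem \ref{formula} then rewrites $f(x+yI) = b(x,y) + I\,c(x,y)$, and composing with the smooth change of coordinates $q \mapsto (\mathrm{Re}(q),\, |\mathrm{Im}(q)|,\, \mathrm{Im}(q)/|\mathrm{Im}(q)|)$ on $\hh \setminus \rr$ yields $f \in C^{\infty}$ on a neighborhood of $q_0$.

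The main subtlety I anticipate is precisely the breakdown of the parameterization $q = x + yI$ along the real axis, where every $I \in \s$ produces the same point: this is exactly what blocks the non-real-point argument from extending by continuity to $\Omega \cap \rr$. The two-case strategy above sidesteps the obstruction by deferring to the independent power-series result at real points.
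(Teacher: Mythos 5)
Your proof is correct. At non-real points it coincides with the paper's own (very terse) argument: the coefficients $b(x,y)=\frac{1}{2}[f(x+yK)+f(x-yK)]$ and $c(x,y)=\frac{K}{2}[f(x-yK)-f(x+yK)]$ involve only the holomorphic restriction $f_K$, hence are $C^{\infty}$, and smoothness of $f$ off the real axis follows from $f(x+yI)=b(x,y)+Ic(x,y)$ together with the smooth chart $q\mapsto(\mathrm{Re}(q),|\mathrm{Im}(q)|,\mathrm{Im}(q)/|\mathrm{Im}(q)|)$. Where you genuinely depart from the paper is at $\Omega\cap\rr$: the paper's ``hence'' does not acknowledge that this parameterization degenerates on the real axis, whereas you explicitly split off that case and settle it by the power series expansion theorem stated in the Introduction, which gives $f\in C^{\infty}(B(p,R))$ on the largest ball centered at each real point $p$. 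This is a legitimate and in fact more complete route; it buys you a clean treatment of the only delicate locus at the cost of invoking an extra (already available) result. An alternative that stays entirely within the paper's framework would be to note that $b$ is even and $c$ is odd in $y$, so that $f(q)=b(x,y)+\mathrm{Im}(q)\,\tilde c(x,y)$ with $c(x,y)=y\,\tilde c(x,y)$ and $b,\tilde c$ smooth in $(x,y^2)$, which handles the real axis without the power series theorem; but your two-case argument is perfectly sound as written.
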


Theorem \ref{formula} is also the basis for the following extension result (see \cite{advancesrevised}).

\begin{lemma}[Extension Lemma]\label{Extension Lemma}
Let $\Omega$ be a symmetric slice domain and choose $I\in\s$. If $f_I:\Omega_I \to L_I$ is holomorphic, then setting
\begin{equation}
f(x+yJ) =\frac{1}{2}\left[f_I(x+yI)+f_I(x-yI)\right]+J\frac{I}{2}\left[f_I(x-yI)-f_I(x+yI)\right]
\end{equation}
extends $f_I$ to a regular function $f:\Omega \to \hh$. $f$ is the unique such extension and it is denoted by ${\rm ext}(f_I)$.
\end{lemma}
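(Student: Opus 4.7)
The plan is to introduce the auxiliary functions $b(x,y)=\tfrac{1}{2}\bigl[f_I(x+yI)+f_I(x-yI)\bigr]$ and $c(x,y)=\tfrac{I}{2}\bigl[f_I(x-yI)-f_I(x+yI)\bigr]$, so that the prescribed formula reads $f(x+yJ)=b(x,y)+Jc(x,y)$. I will then verify, in this order, that $f$ is well defined on $\Omega$, that $f_{|_{\Omega_I}}=f_I$, that $f$ is regular, and that $f$ is the unique such extension.

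The first two points are almost immediate. Symmetry of $\Omega$ guarantees that $x+y\s \subset \Omega$ whenever $x+yJ \in \Omega$, so $x\pm yI \in \Omega_I$ and the formula makes sense. A direct check shows $b(x,-y)=b(x,y)$ and $c(x,-y)=-c(x,y)$, so the formula is invariant under $(y,J) \mapsto (-y,-J)$, which is the only ambiguity in writing a non-real $q$ as $x+yJ$; and at real points it collapses to $b(x,0)=f_I(x)$ regardless of $J$. Plugging $J=I$ into the formula uses $I^2=-1$ to produce $f(x+yI)=f_I(x+yI)$, establishing that $f$ extends $f_I$.

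For regularity, I would fix any $J\in\s$ and apply $\bar\partial_J$ to $f_J(x+yJ)=b(x,y)+Jc(x,y)$; using $J^2=-1$ this yields
$$\bar\partial_J f_J = \tfrac{1}{2}\bigl[(\partial_x b - \partial_y c)+J(\partial_y b + \partial_x c)\bigr].$$
The Cauchy--Riemann equation $\partial_y f_I = I\,\partial_x f_I$ on $\Omega_I$ then gives, by a chain-rule computation applied to $f_I(x\pm yI)$, the two identities $\partial_x b=\partial_y c$ and $\partial_y b = -\partial_x c$, so $\bar\partial_J f_J$ vanishes identically on $\Omega_J$. The main technical care in this step is tracking the signs produced by the $-I$ coming from $\partial_y[f_I(x-yI)]$ together with the $I$ in the definition of $c$ and the factor $J^2=-1$ in $\bar\partial_J$; these conspire so that the two scalar components of $\bar\partial_J f_J$ both reduce to the same CR-type identities for the pair $(b,c)$, independently of $J$.

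Uniqueness follows at once from the Identity Principle (Theorem \ref{identity}): any two regular extensions of $f_I$ to $\Omega$ agree on the complex slice $\Omega_I$, which, as $\Omega$ is a slice domain, has accumulation points in itself, so the two extensions must coincide throughout $\Omega$.
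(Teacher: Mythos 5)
Your proof is correct. The paper does not actually prove this lemma --- it imports it from \cite{advancesrevised}, remarking only that Theorem \ref{formula} underlies it --- and your argument (well-definedness via the symmetry $b(x,-y)=b(x,y)$, $c(x,-y)=-c(x,y)$, the direct verification that $\bar\partial_J f_J=0$ reduces to $\partial_x b=\partial_y c$ and $\partial_y b=-\partial_x c$ via the Cauchy--Riemann equation for $f_I$, and uniqueness from the Identity Principle \ref{identity}) is precisely the standard verification given in that reference.
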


The Extension Lemma \ref{Extension Lemma} is used in \cite{advancesrevised} to endow regular functions with a multiplicative operation. Its definition is recalled in Section \ref{sectionpreliminary}, together with other algebraic tools introduced in the same paper. 

The original part of this paper is structured as follows. After studying the algebraic properties of the zero set in Section \ref{sectionalgebraic}, we explore its topology and prove the following result in Section \ref{sectionstructure}:

\begin{theorem}[Structure of the Zero Set]
Let $\Omega\subseteq\mathbb{H}$ be a symmetric slice domain and let  $f:\Omega\to\mathbb{H}$ be a regular function. If $f$ does not vanish identically, then the zero set of $f$ consists of isolated points or isolated 2-spheres of the form $x+y\mathbb{S}$ (with $x,y \in \rr$ and $y \neq 0$).
\end{theorem}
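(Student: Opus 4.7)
The plan is to reduce everything to classical one-variable complex analysis via the symmetrization operation. Fix a zero $p = x + yI$ of $f$ and distinguish cases according to Corollary \ref{affinezeros}: either $y = 0$ (real zero), or $y \neq 0$ and $f \equiv 0$ on the sphere $x + y\s$ (in which case the target is to show $x + y\s$ is an isolated $2$-sphere of zeros), or $y \neq 0$ and $p$ is the unique zero of $f$ on $x + y\s$ (in which case the target is to show $p$ is an isolated point of $\z(f)$). All three situations will be handled by a single mechanism.

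The key device is the regular symmetrization $f^s = f * f^c$ developed in Section \ref{sectionpreliminary} and studied algebraically in Section \ref{sectionalgebraic}. The properties I plan to invoke are: (i) $f^s$ is regular on $\Omega$; (ii) each restriction $f^s_J : \Omega_J \to \hh$ actually takes values in $L_J$; (iii) $\z(f^s)$ is axially symmetric and contains $\z(f)$; and (iv) if $f \not\equiv 0$ then $f^s \not\equiv 0$. Granted these, for every $J \in \s$ the map $f^s_J$ is a non-identically-zero classical holomorphic function on the planar domain $\Omega_J \subseteq L_J \simeq \cc$ (non-identically-zero by Theorem \ref{identity} applied to $f^s$), so its zero set is discrete in $\Omega_J$. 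Since $p \in \z(f) \subseteq \z(f^s)$ and $\z(f^s)$ is axially symmetric, the whole sphere $x + y\s$ (which collapses to $\{p\}$ when $y = 0$) lies in $\z(f^s)$, and in particular $x \pm yJ \in \z(f^s_J)$ for every $J$.

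The main obstacle is upgrading ``discrete in each slice $\Omega_J$'' to ``isolated in $\hh$''. I plan to argue by contradiction: if there were a sequence $q_n \in \z(f^s) \setminus (x+y\s)$ with $q_n \to q \in x + y\s$, then writing $q_n = x_n + y_n I_n$ with $y_n \geq 0$ and $I_n \in \s$, continuity of the real part and of the modulus of the imaginary part gives $x_n \to x$ and $y_n \to y$; fixing any $J \in \s$, axial symmetry of $\z(f^s)$ yields $x_n + y_n J \in \z(f^s_J)$, and for large $n$ these are distinct from $x \pm yJ$ yet converge to $x + yJ$, contradicting discreteness of $\z(f^s_J)$. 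Hence $x + y\s$ is isolated in $\z(f^s)$, and since $\z(f) \subseteq \z(f^s)$, it is a fortiori isolated in $\z(f)$; combined with Corollary \ref{affinezeros} this yields the theorem. For the real case $y = 0$ one could alternatively argue directly by expanding $f(q) = \sum_{n \in \nn} (q-p)^n a_n$, factoring out the leading term $(q-p)^m$, and observing that $(q-p)^m$ vanishes only at $q = p$ because $p$ is real, which gives a cleaner proof in that case but does not cover the spherical situation.
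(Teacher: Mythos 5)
Your proposal is correct and follows essentially the same route as the paper: pass to the symmetrization $f^s$, exploit that $f^s$ preserves each slice $L_J$ so that its zero set is axially symmetric and discrete on every $\Omega_J$ (the paper packages exactly this as Lemmas \ref{simmetriazerireale} and \ref{zeriseriereale}, leaving your explicit ``slice-discrete implies isolated in $\hh$'' limit argument implicit), and then transfer back to $f$ via $\z(f)\subseteq\z(f^s)$ (the paper uses the finer correspondence of Proposition \ref{zericoniugata}). The one assertion you state without proof, that $f\not\equiv 0$ implies $f^s\not\equiv 0$, does deserve a word --- by formula (\ref{simmetrizzata}) one has $f^s(x)=|F(x)|^2+|G(x)|^2$ for $x\in\Omega\cap\rr$, so $f^s\equiv 0$ would force $f_I\equiv 0$ on the real axis and hence $f\equiv 0$ by the Identity Principle \ref{identity} --- but the paper's own proof relies on this fact just as tacitly.
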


\noindent Even though the statement above replicates the one established for quaternionic power series in their domain of convergence, its proof requires a different approach that relies upon extension results proven in \cite{advancesrevised}. This leads, in particular, to a stronger version of the Identity Principle: 

\begin{theorem}[Strong Identity Principle]
Let $f, g$ be regular functions on a symmetric slice domain $\Omega$.  If there exists a $2$-sphere (or a singleton) $S=x+y\mathbb{S}\subset \Omega$ such that the zeros of $f-g$ contained in $\Omega \setminus S$ accumulate to a point of $S$, then $f\equiv g$ on the whole $\Omega$.
\end{theorem}

\noindent Section \ref{sectionminimum} is devoted to the Maximum and Minimum Modulus Principles. Proving them we have to face the peculiarities of the quaternionic context. The approach is different from the one used in \cite{advances}.

\begin{theorem}[Maximum Modulus Principle]
Let $\Omega$ be a slice domain and let $f : \Omega \to \hh$ be regular. If $|f|$ has a relative maximum at $p \in \Omega$, then $f$ is constant.
\end{theorem}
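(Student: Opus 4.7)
The plan is to reduce the problem to the slice $\Omega_I$ containing $p$ and adapt the classical one-variable argument via a splitting of $f_I$ into components valued in $L_I$.

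Select $I \in \s$ so that $p \in L_I$: if $p \not\in \rr$ such an $I$ is essentially unique, while if $p \in \rr$ any $I$ will do. Since $\Omega$ is a slice domain, $\Omega_I$ is a connected open subset of $L_I \simeq \cc$, and $f_I = f|_{\Omega_I}$ is holomorphic in the sense of Definition \ref{definition}. Choose any $J \in \s$ with $J \perp I$, so that $\hh = L_I \oplus L_I J$ as a right $L_I$-module, and write $f_I = F + GJ$ with $F, G : \Omega_I \to L_I$ the components. The condition $\bar\partial_I f_I \equiv 0$ splits into $\bar\partial_I F \equiv 0 \equiv \bar\partial_I G$, so $F$ and $G$ are classically holomorphic functions on the planar domain $\Omega_I$, and a direct computation gives $|f_I|^2 = |F|^2 + |G|^2$.

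By hypothesis $|f_I|$ has a relative maximum at $p$, so on any sufficiently small closed disk $\overline{D(p,r)} \subset \Omega_I$ one has $|f_I| \le |f_I(p)|$. Applying the sub-mean-value inequality for the subharmonic functions $|F|^2$ and $|G|^2$ on the circle of radius $r$ about $p$ yields
\begin{equation*}
|f_I(p)|^2 \;\le\; \frac{1}{2\pi}\int_0^{2\pi} |f_I(p + re^{I\theta})|^2 \, d\theta \;\le\; |f_I(p)|^2,
\end{equation*}
so equality holds and $|f_I|$ is constant on $D(p,r)$. Using the identity $\Delta |F|^2 = 4|F'|^2$ for holomorphic $F$ (and analogously for $G$), the vanishing of $\Delta(|F|^2 + |G|^2)$ forces $F' \equiv G' \equiv 0$ on $D(p,r)$. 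Hence $F$ and $G$ are locally constant, and $f_I \equiv f(p)$ on $D(p,r)$.

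Finally, the constant map $q \mapsto f(p)$ is regular on $\Omega$ and coincides with $f$ on the open subset $D(p,r)$ of $\Omega_I$; the Identity Principle \ref{identity} then delivers $f \equiv f(p)$ throughout $\Omega$. The main obstacle lies in bridging local constancy of $|f_I|$ to local constancy of $f_I$ itself, since the codomain $\hh$ is non-commutative and so the classical complex maximum modulus principle cannot be invoked directly on $f_I$; the splitting into two $L_I$-valued holomorphic components, combined with the elementary identity $\Delta|h|^2 = 4|h'|^2$, resolves this cleanly and differs from the power-series approach used in \cite{advances}.
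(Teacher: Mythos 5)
Your proof is correct, and while it shares the paper's overall skeleton (restrict to the slice $\Omega_I$ through $p$, split $f_I = F + GJ$ into $L_I$-valued holomorphic components with $|f_I|^2 = |F|^2 + |G|^2$, conclude $f_I$ is locally constant, propagate by the Identity Principle), the key local step is executed differently. The paper first normalizes by replacing $f$ with $f\,\overline{f(p)}$ so that $f(p)$ is real and positive; this forces $G(p)=0$ and $|F(p)| = |f_I(p)|$, so $|F|$ itself has a relative maximum at $p$, the classical one-variable Maximum Modulus Principle gives $F \equiv f(p)$, and then $|G|^2 = |f_I|^2 - |F|^2 \le 0$ kills $G$. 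You avoid the normalization entirely and instead run a sub-mean-value argument on the subharmonic function $|F|^2 + |G|^2$, followed by the identity $\Delta\bigl(|F|^2+|G|^2\bigr) = 4\bigl(|F'|^2+|G'|^2\bigr)$ to force $F' \equiv G' \equiv 0$. Your route is more self-contained (it reproves rather than invokes the complex principle) and handles the case $f(p)=0$ uniformly, where the paper dismisses it separately; the paper's version is shorter because it delegates the analytic work to the classical theorem. Two small points to tighten: constancy of $|f_I|$ on the whole disk $D(p,r)$ requires running the circle average over every radius $r' \le r$, not just the one you display; and you tacitly use that the Splitting Lemma, stated in the paper for symmetric slice domains, is in fact a slice-wise algebraic fact valid on any slice domain --- this is true, and the paper's own proof relies on it in the same way.
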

\begin{theorem}[Minimum Modulus Principle]
Let $\Omega$ be a symmetric slice domain and let $f : \Omega \to \hh$ be a regular function. If $|f|$ has a local minimum point $p\in \Omega$ then either $f(p)=0$ or $f$ is constant.
\end{theorem}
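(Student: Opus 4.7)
The natural strategy is to reduce to the Maximum Modulus Principle just established, by introducing the regular reciprocal $f^{-*}$ whose construction (via the $*$-product, the regular conjugate $f^c$ and the symmetrization $f^s = f*f^c$) is recalled in Section~\ref{sectionpreliminary}.

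If $f(p)=0$ there is nothing to prove, so suppose $f(p)\neq 0$ and aim to show that $f$ must be constant. By continuity of $f$ and of $f^s$, both are nonzero on a symmetric open neighborhood $U\subseteq\Omega$ of $p$, and on $U$ the regular reciprocal
\[
f^{-*} \;=\; (f^s)^{-1}\,f^c
\]
is well-defined and regular, satisfying $f*f^{-*}\equiv 1$.

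The key input is the pointwise formula for the $*$-product,
\[
(f*g)(q) \;=\; f(q)\,g\!\left(f(q)^{-1}\,q\,f(q)\right) \qquad \text{whenever } f(q)\neq 0,
\]
which, applied to $f*f^{-*}\equiv 1$, gives $|f^{-*}(T_f(q))| = 1/|f(q)|$ on $U$, where $T_f(q) := f(q)^{-1}\,q\,f(q)$. The local-minimum hypothesis for $|f|$ then reads $|f^{-*}(T_f(q))|\leq|f^{-*}(T_f(p))|$ for $q$ near $p$, so that $|f^{-*}|$ attains a local maximum at $T_f(p)$ along the image set $T_f(U)$.

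The main obstacle is upgrading this to a local maximum of $|f^{-*}|$ on an open neighborhood of $T_f(p)$ in $\hh$, as required by the Maximum Modulus Principle, since the map $T_f$ is in general not open. The plan is to exploit that $T_f$ maps each sphere $x+y\s\subseteq U$ into itself and fixes $\Omega\cap\rr$ pointwise, and to combine this with Theorem~\ref{formula}, which writes $f^{-*}$ on each sphere $x+y\s$ as the affine function $I \mapsto b'(x,y) + Ic'(x,y)$, thereby reducing the sphere-wise analysis of $|f^{-*}|$ to an explicit calculation. A case analysis distinguishing $y_0 = 0$ (real $p$) from $y_0 \neq 0$, together with control of where the maximum of $|b'+Ic'|$ over $I\in\s$ is attained, will show that the one-point-per-sphere bound propagates to an open neighborhood of $T_f(p)$. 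Once the open local maximum is established, the Maximum Modulus Principle forces $f^{-*}$ to be constant on $\Omega$, and hence $f$ itself is constant, completing the proof.
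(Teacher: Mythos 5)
Your overall strategy coincides with the paper's: pass to the regular reciprocal $f^{-*}=(f^s)^{-1}f^c$ and convert the local minimum of $|f|$ into a local maximum of $|f^{-*}|$. But the proof is not complete: the step you yourself single out as ``the main obstacle'' --- upgrading the bound $|f^{-*}(T_f(q))|\le|f^{-*}(T_f(p))|$ from the set $T_f(U)$ to a genuine open neighborhood of $T_f(p)$ --- is exactly the crux, and you only sketch a plan for it rather than carrying it out. The paper closes this gap in Proposition \ref{reciprocalformula}: the maps $q\mapsto f^c(q)^{-1}qf^c(q)$ and $q\mapsto f(q)^{-1}qf(q)$ are mutual inverses on $\Omega\setminus\z_{f^s}$, because composing them conjugates $q$ by $f^c(q)\,f\bigl(f^c(q)^{-1}qf^c(q)\bigr)=f^s(q)$, and $f^s(q)$ commutes with $q$ since $f^s$ preserves each complex line (Lemma \ref{simmetrizzatareale}). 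Hence your $T_f$ is a diffeomorphism of $\Omega\setminus\z_{f^s}$ onto itself, $T_f(U)$ is open, and the local maximum is automatic --- no sphere-wise case analysis is needed. Your proposed substitute (controlling where the maximum of $|b'+Ic'|$ over $I\in\s$ sits, sphere by sphere) is not obviously workable: the one-point-per-sphere estimate you obtain locates the controlled point at $f(q)^{-1}I f(q)$, a position that moves with $q$, and nothing in the sketch shows these points sweep out a neighborhood.

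There is a second, smaller gap at the start: you assert that $f^s$ is nonzero near $p$ ``by continuity'' of $f$ and $f^s$, but $f(p)\neq0$ does not by itself give $f^s(p)\neq0$. By Proposition \ref{zericoniugata}, a single zero of $f$ anywhere on the sphere $S=x+y\s$ through $p$ forces $f^s\equiv0$ on all of $S$, in particular at $p$. So you must first rule out zeros of $f$ on the whole of $S$. The paper does this using Theorem \ref{formula}: $I\mapsto f(x+yI)$ is affine, so $|f_{|_S}|$ has a single local minimum on $S$; a zero $p'\in S$ would be a second one, contradicting the minimality of $|f|$ at $p$ with $f(p)\neq0$. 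Only after this does the symmetric set $\Omega\setminus\z_{f^s}$ contain $S$ and the reciprocal become available where you need it.
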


\noindent As one may expect, these Principles are the main tools in the investigation of the topological properties of regular functions. In Section \ref{sectionopen} we define the \emph{degenerate set} of $f$ as the union $D_{f}$ of the 2-spheres $x+y\s$ such that $f_{|_{x+y\s}}$ is constant. It turns out that $D_{f}$ has no interior points and that $f$ is open on the rest of the domain:

\begin{theorem}[Open Mapping Theorem]
Let $f$ be a regular function on a symmetric slice domain $\Omega$ and let $D_f$ be its degenerate set. Then $f:\Omega \setminus \overline{D_f} \to \hh$ is open.
\end{theorem}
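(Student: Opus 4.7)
The plan is to imitate the classical complex-analytic proof of the Open Mapping Theorem, with the usual Rouch\'e/winding-number step replaced by a direct appeal to the Minimum Modulus Principle on the ambient symmetric slice domain $\Omega$.

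First, I fix $p \in \Omega \setminus \overline{D_f}$, set $q_0 := f(p)$, and reduce the problem to showing that $q_0$ lies in the interior of $f(U)$ for every open neighborhood $U$ of $p$ contained in $\Omega \setminus \overline{D_f}$. Define the regular function $g(q) := f(q) - q_0$, which vanishes at $p$. Next, I would establish that $p$ is an isolated zero of $g$. By the Structure of the Zero Set Theorem the zero set of $g$ consists of isolated points and isolated $2$-spheres $x+y\s$; if $p = x_0+y_0 I_0$ lay on a whole sphere of zeros $x_0+y_0\s$, then $g \equiv 0$ there, so $f \equiv q_0$ on $x_0+y_0\s$ and hence $p \in D_f$, contradicting $p \notin \overline{D_f}$. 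Consequently I can pick $r>0$ so small that $\overline{B(p,r)} \subset U$ and $g$ is nowhere zero on $\overline{B(p,r)} \setminus \{p\}$, and then set $m := \min_{q \in \partial B(p,r)} |g(q)| > 0$ by compactness.

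The main step is to verify the inclusion $f(U) \supset B(q_0, m/2)$. Given $w \in \hh$ with $0 < |w-q_0| < m/2$, I consider the regular function $h(q) := f(q) - w$ on $\Omega$. A triangle-inequality estimate gives $|h(p)| = |q_0 - w| < m/2$, whereas for $q \in \partial B(p,r)$ one has $|h(q)| \geq |g(q)| - |w-q_0| > m/2$. Hence the restriction of $|h|$ to the compact set $\overline{B(p,r)}$ attains its minimum at an interior point $q_1 \in B(p,r)$, and this is \emph{a fortiori} a local minimum of $|h|$ on the symmetric slice domain $\Omega$. If $h$ were nonzero throughout $B(p,r)$, then $h(q_1) \neq 0$, and the Minimum Modulus Principle would force $h$ to be constant on $\Omega$, contradicting the strict inequality $|h(p)| < |h(q)|$ on $\partial B(p,r)$. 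Therefore $h$ vanishes at some point of $B(p,r) \subset U$, i.e. $w \in f(U)$, proving that $q_0$ is interior to $f(U)$.

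The main obstacle, I expect, is conceptual rather than computational: a small Euclidean ball $B(p,r)$ around a non-real point $p$ is almost never a symmetric slice domain, so the Minimum Modulus Principle cannot be applied on $B(p,r)$ itself. The crucial trick is that one need not; it suffices to produce a local minimum of $|h|$ somewhere in $\Omega$, and then apply the Minimum Modulus Principle on $\Omega$, which \emph{is} a symmetric slice domain by hypothesis. The condition $p \notin \overline{D_f}$ enters the argument in exactly one place, to exclude the possibility of a sphere of zeros for $g$ and thereby legitimize the choice of $B(p,r)$; this is precisely why one must excise $\overline{D_f}$ from the domain in the statement.
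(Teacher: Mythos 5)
Your proposal is correct and follows essentially the same route as the paper's proof: use the Structure of the Zero Set Theorem together with $p\notin\overline{D_f}$ to isolate the zero of $f(q)-q_0$, bound $|f-w|$ from below on the boundary sphere and from above at the center, and invoke the Minimum Modulus Principle on the ambient symmetric slice domain $\Omega$ to force a zero of $f-w$ inside the ball. The only differences are cosmetic (your $m/2$ threshold versus the paper's $3\varepsilon$ versus $\varepsilon$ bookkeeping), and your closing remark about why the Minimum Modulus Principle is applied on $\Omega$ rather than on the small Euclidean ball correctly identifies the point the paper uses implicitly.
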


\noindent Removing the degenerate set is necessary, as shown by a counterexample. Under this point of view, the theory of quaternionic regular functions differs from that of holomorphic complex functions. This depends on the fact that the zero set of a holomorphic function is discrete, while a regular quaternionic function may vanish on a whole 2-sphere as explained above.


\section{Preliminary results}\label{sectionpreliminary}

The set of regular functions on a symmetric slice domain $\Omega$ becomes an algebra over $\rr$ when endowed with the usual addition $+$ and an appropriate multiplicative operation denoted by $*$ and called \emph{regular multiplication} (indeed, the pointwise multiplication does not preserve regularity). In the special case where the domain is a ball centered at $0$, we can make use of the power series expansion and define the $*$-multiplication by the formula
$$\left(\sum_{n \in \nn} q^n a_n\right) * \left(\sum_{n \in \nn} q^n b_n\right) = \sum_{n \in \nn} q^n \sum_{k = 0}^n a_k b_{n-k}$$
(see \cite{zeros} for details). The general definition, given in \cite{advancesrevised}, is based on the following result.

\begin{lemma}[Splitting Lemma]
Let $\Omega\subseteq\mathbb{H}$ be a symmetric slice domain and let $f:\Omega\to\mathbb{H}$ be a regular function. For any $I,J\in\mathbb{S}$, with $I\perp J$ there exist holomorphic functions $F,G: \Omega_I \to L_I$ such that for all $z \in \Omega_I$
\begin{equation}
 f_I(z)=F(z)+G(z)J.
\end{equation}
\end{lemma}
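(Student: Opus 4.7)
The plan is to exploit the real vector space decomposition $\mathbb{H}=L_I\oplus L_IJ$, which holds whenever $I\perp J$ in $\mathbb{S}$, because $\{1,I,J,IJ\}$ is then an $\mathbb{R}$-basis of $\mathbb{H}$. Via this direct sum, every quaternion $q$ admits a unique expression $q=\alpha+\beta J$ with $\alpha,\beta\in L_I$, and this decomposition is $\mathbb{R}$-linear and continuous. Applying it pointwise to $f_I:\Omega_I\to\mathbb{H}$ yields two well-defined functions $F,G:\Omega_I\to L_I$ such that $f_I(z)=F(z)+G(z)J$ for every $z\in\Omega_I$. Since the projections onto $L_I$ and onto $L_IJ$ are $\mathbb{R}$-linear and $f_I\in C^1(\Omega_I)$ by Definition \ref{definition}, both $F$ and $G$ are of class $C^1$ on $\Omega_I$.

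The next step is to verify the holomorphy of $F$ and $G$. Writing $z=x+Iy$ and applying the Cauchy-Riemann-type operator $\bar\partial_I=\tfrac12(\partial_x+I\partial_y)$ to the decomposition, one obtains
\begin{equation*}
0=\bar\partial_I f_I(x+Iy)=\bar\partial_I F(x+Iy)+\bigl(\bar\partial_I G(x+Iy)\bigr)J,
\end{equation*}
where we used that $J$ is a constant and that $I$ commutes with the operation of right-multiplication by $J$ only after we track the $L_I$-valued output (more precisely, $\partial_x$ and $\partial_y$ act componentwise and $\bar\partial_I G(x+Iy)$ lies in $L_I$, so its product with $J$ lies in $L_IJ$). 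The two summands on the right therefore live in the complementary subspaces $L_I$ and $L_IJ$, and the direct sum decomposition forces both to vanish. Hence $\bar\partial_I F\equiv 0$ and $\bar\partial_I G\equiv 0$ on $\Omega_I$, which is exactly the holomorphy required.

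The main (and essentially only) subtlety to be careful with is the interaction of $I$-differentiation with the $\oplus$-decomposition: one needs the observation that left-multiplication by $I$ preserves $L_I$ and hence also preserves $L_IJ$, so that $I\partial_y(GJ)=(I\partial_y G)J$ still lives in $L_IJ$. Once this is in place the argument is purely algebraic. Note that axial symmetry of $\Omega$ is not used in the statement itself (the argument works on any slice domain, or even pointwise on each $\Omega_I$), although it is the natural hypothesis within the paper's framework.
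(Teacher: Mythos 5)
Your proof is correct and is essentially the standard argument for this lemma: the paper itself states the Splitting Lemma without proof (quoting it from \cite{advancesrevised}), and the proof given there proceeds exactly as you do, by decomposing $\mathbb{H}=L_I\oplus L_IJ$, writing $f_I=F+GJ$, and observing that $\bar\partial_I$ respects this decomposition because left multiplication by $I$ preserves $L_I$ and (by associativity) $L_IJ$. Your closing remark is also accurate: only the slice-domain structure of $\Omega_I$ is needed, not axial symmetry.
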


In order to define the regular product of two regular functions $f,g$ on a symmetric slice domain $\Omega$, let $I,J\in\mathbb{S}$, with $I\perp J$, and choose holomorphic functions $F,G,H,K: \Omega_I\to L_I$ such that for all $z\in \Omega_I$
\begin{equation}\label{splitted f}
 f_I(z)=F(z)+G(z)J, \qquad g_I(z)=H(z)+K(z)J.
\end{equation}
Let  $f_I*g_I:\Omega_I \to L_I$ be the holomorphic function defined by
\begin{equation}\label{f*g}
f_I*g_I(z)=[F(z)H(z)-G(z)\overline{K(\bar z)}]+[F(z)K(z)+G(z)\overline{H(\bar z)}]J.
\end{equation}
Using the Extension Lemma \ref{Extension Lemma}, the following definition is given in \cite{advancesrevised}:

\begin{definition}
Let $\Omega\subseteq\mathbb{H}$ be a symmetric slice domain and let $f,g:\Omega\to\mathbb{H}$  be regular. The function
$$f*g(q)={\rm ext}(f_I*g_I)(q)$$ 
defined as the extension of (\ref{f*g}) is called the \emph{regular product} of $f$ and $g$.
\end{definition}

\begin{remark}
The $*$-multiplication is associative, distributive but, in general, not commutative.
\end{remark}

The \emph{regular conjugate} of a power series, is defined in \cite{zeros} by the formula 
$$\left(\sum_{n \in \nn} q^n a_n\right)^c = \sum_{n \in \nn} q^n \bar a_n.$$ 
In \cite{advancesrevised} this concept is extended as follows.
\begin{definition}
Let $f$ be a regular function on a symmetric slice domain $\Omega$ and suppose $f$ splits on $\Omega_I$ as in formula (\ref{splitted f}),  $f_I(z)=F(z)+G(z)J$. We consider the holomorphic function 
\begin{equation}\label{formconj}
f_I^c(z)=\overline{F(\bar z)}-G(z)J
\end{equation} and define, according to the Extension Lemma \ref{Extension Lemma}, the \emph{regular conjugate} of $f$ by the formula
\begin{equation}
f^c(q)={\rm ext}(f_I^c)(q).
\end{equation}
\end{definition}

Furthermore, the following definition is given under the same assumptions.

\begin{definition}
The \emph{symmetrization} of $f$ is defined as
\begin{equation}
f^s = f*f^c = f^c * f.
\end{equation}
\end{definition}

In the case of power series,
$$\left(\sum_{n \in \nn} q^n a_n\right)^s = \sum_{n \in \nn} q^n \sum_{k = 0}^n a_k \bar a_{n-k},$$
where $\sum_{k = 0}^n a_k \bar a_{n-k} \in \rr$ for all $n \in \nn$. In the general case, when $f$ splits on $\Omega_I$ as in formula (\ref{splitted f}) then 
\begin{equation}\label{simmetrizzata}
f_I*f^c_I=(F(z)+G(z)J)*(\overline{F(\bar z)}-G(z)J)=F(z)\overline{F(\bar z)}+G(z)\overline{G(\bar z)}.
\end{equation}
Hence
\begin{equation}\label{f^s}
f^s(q)={\rm ext}(f_I*f^c_I)(q).
\end{equation}

Finally, in Section \ref{sectionminimum}, we will make use of the following operation to derive the Minimum Modulus Principle from the Maximum Modulus Principle.

\begin{definition}
Let $f$ be a regular function on a symmetric slice domain $\Omega$. The \emph{regular reciprocal} of $f$ is the function $f^{-*} : \Omega \setminus \z_{f^s} \to \hh$ defined by the equation
\begin{equation}
f^{-*}(q) = \frac{1}{f^{s}(q)} f^c(q)
\end{equation}
\end{definition}
 
By direct computation $f^{-*}$ is the inverse of $f$ with respect to $*$-multiplication, i.e. $f*f^{-*} = f^{-*}*f \equiv 1$ on $\Omega \setminus \z_{f^s}$.


\section{Algebraic properties of the zero set}\label{sectionalgebraic}

We now study of the correspondences among the zeros of $f$ and $g$ and those of the product $f*g$, the conjugate $f^c$ and the symmetrization $f^s$. We begin by recalling an alternative expression of the regular product $f*g$, proven in \cite{advancesrevised}.

\begin{prop}\label{formprod}
Let $\Omega\subseteq\mathbb{H}$ be a symmetric slice domain and let $f,g:\Omega\to\mathbb{H}$ be regular functions. For all $q\in\Omega$, if $f(q) = 0$ then $f*g(q) = 0$, else
\begin{equation}\label{formula prodotto}
f*g(q) = f(q)\ g(f(q)^{-1} q f(q)).
\end{equation}
\end{prop}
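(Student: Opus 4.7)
My plan is to verify the identity first on $\Omega_I$ by direct computation using the Splitting Lemma, then extend to all of $\Omega$ by invoking Theorem \ref{formula} for both sides.

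Fix $I, J \in \s$ with $I \perp J$, and splittings $f_I = F + GJ$, $g_I = H + KJ$ with $F, G, H, K : \Omega_I \to L_I$ holomorphic. For $z \in \Omega_I$, I consider two cases. If $f(z) = 0$, then $F(z) = G(z) = 0$ by the $L_I$-linear independence of $\{1, J\}$, and (\ref{f*g}) yields $f_I*g_I(z) = 0$. If $\alpha := f(z) \neq 0$, let $I' = \alpha^{-1} I \alpha \in \s$, so that $\alpha^{-1} z \alpha = x + y I'$ lies on the sphere $x + y\s$; the key identity here is $\alpha I' = I \alpha$. Using Theorem \ref{formula},
\begin{equation*}
g(x + yI') = \tfrac{1}{2}\bigl(g(z) + g(\bar z)\bigr) + I' \cdot \tfrac{I}{2}\bigl(g(\bar z) - g(z)\bigr).
\end{equation*}
Multiplying by $\alpha$ on the left and substituting $\alpha I' = I\alpha$, an expansion in the splittings (exploiting $Jw = \bar w J$ for $w \in L_I$ and the relation $I\alpha I = -F(z) + G(z)J$) shows that $\alpha \cdot g(\alpha^{-1} z \alpha)$ matches the right-hand side of (\ref{f*g}), hence equals $f*g(z)$.

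To extend to a general $q = x + yM \in \Omega$, Theorem \ref{formula} gives an axial expression of $f*g(q)$ in terms of $f*g(z)$ and $f*g(\bar z)$; analogously $f(q) = A_1 + M A_2$ and $g(x + yM'') = b_g + M'' c_g$ for any $M'' \in \s$, with $A_1, A_2, b_g, c_g$ determined by values at $z, \bar z$. Setting $M' := f(q)^{-1} M f(q)$ when $f(q) \neq 0$, one has $f(q)^{-1} q f(q) = x + yM'$ and the useful identity $f(q)M' = M f(q)$, so
\begin{equation*}
f(q)\,g\bigl(f(q)^{-1} q f(q)\bigr) = f(q)(b_g + M' c_g) = f(q) b_g + M f(q) c_g,
\end{equation*}
an expression also well-defined (and equal to $0$) when $f(q) = 0$. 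Substituting the Step~1 formulas for $f*g(z)$ and $f*g(\bar z)$, a direct algebraic comparison yields the desired equality on all of $\Omega$.

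The principal obstacle is the non-commutative bookkeeping in the first step. Expanding $\alpha \cdot g(\alpha^{-1} z \alpha)$ into the splitting basis $\{1, J\}$ over $L_I$ requires repeatedly invoking $Jw = \bar w J$, the conjugation identity $\alpha(\alpha^{-1} w \alpha) = w \alpha$, and the explicit form of $I \alpha I$; once these are organized, the match with the four-term expression in (\ref{f*g}) is mechanical, and the propagation to arbitrary $q$ in Step~2 reduces to a symbolic identification in the four quaternions $f(z), f(\bar z), g(z), g(\bar z)$.
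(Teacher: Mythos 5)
The paper does not actually prove this proposition: it is imported verbatim from \cite{advancesrevised} ("We begin by recalling an alternative expression of the regular product $f*g$, proven in \cite{advancesrevised}"), so there is no in-paper argument to compare yours against. Your proof is correct as a self-contained derivation from the tools the paper does state. I checked the key computation in your Step 1: with $\alpha = F(z)+G(z)J$ one indeed has $I\alpha I = -F(z)+G(z)J$, and then
\begin{equation*}
\tfrac{1}{2}\bigl[\alpha\,(g(z)+g(\bar z)) + I\alpha I\,(g(\bar z)-g(z))\bigr] = \bigl[FH(z)-G\overline{K(\bar z)}\bigr]+\bigl[FK(z)+G\overline{H(\bar z)}\bigr]J,
\end{equation*}
which is exactly $f_I*g_I(z)$ as in (\ref{f*g}); the cross terms cancel precisely because of $Jw=\bar wJ$. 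Your Step 2 is also sound: writing $f(q)=b_f+Mc_f$ and using $f(q)M'=Mf(q)$, the right-hand side becomes $b_fb_g-c_fc_g+M(c_fb_g+b_fc_g)$, an affine function of $M\in\s$ that agrees with the affine function $M\mapsto f*g(x+yM)$ at $M=\pm I$, hence everywhere; this uniform form also covers the case $f(q)=0$. The one presentational gap is that both expansion steps are asserted rather than displayed, but the ingredients you list ($\alpha I'=I\alpha$, $Jw=\bar wJ$, the explicit $b,c$ of Theorem \ref{formula}) are exactly what is needed and the bookkeeping closes.
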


\begin{corollary}\label{zeriprodotto}
Let $\Omega\subseteq\mathbb{H}$ be a symmetric slice domain and let $f,g:\ \Omega\to\mathbb{H}$ be regular functions.
Then $f*g(q) = 0$ if and only if $f(q) = 0$ or $f(q) \neq 0$ and $g(f(q)^{-1} q f(q))=0$.
\end{corollary}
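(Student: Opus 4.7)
The plan is to deduce the corollary directly from Proposition \ref{formprod}, which provides an explicit pointwise formula for $f*g(q)$ in terms of $f(q)$ and a single value of $g$. I would split the argument according to whether $f(q)$ vanishes.

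First, in the case $f(q)=0$, Proposition \ref{formprod} already asserts $f*g(q)=0$. This simultaneously delivers the ``$\Leftarrow$'' direction in this case and matches the first clause of the stated equivalence on the ``$\Rightarrow$'' side.

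Next, if $f(q)\neq 0$, I would apply the other branch of Proposition \ref{formprod} to write
\[
f*g(q) = f(q)\, g\!\left(f(q)^{-1} q\, f(q)\right).
\]
Here I would invoke the fact that $\hh$ is a division algebra: since $f(q)$ is a nonzero quaternion, it is invertible and has no zero divisors, so the product above vanishes exactly when $g(f(q)^{-1} q\, f(q))=0$. Combining the two cases yields the ``if and only if'' in the statement.

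I do not anticipate any real obstacle: this is a genuine corollary of Proposition \ref{formprod}. The only point requiring a moment's care is the cancellation step, which rests on the absence of zero divisors in $\hh$; the same division-ring structure also ensures that the conjugate point $f(q)^{-1} q\, f(q)$ at which $g$ is evaluated is well-defined whenever $f(q)\neq 0$.
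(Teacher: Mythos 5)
Your proposal is correct and coincides with the paper's own (implicit) argument: the corollary is stated as an immediate consequence of Proposition \ref{formprod}, with the only substantive point being exactly the one you identify, namely that $\hh$ has no zero divisors, so $f(q)\,g(f(q)^{-1}qf(q))=0$ with $f(q)\neq 0$ forces $g(f(q)^{-1}qf(q))=0$. Nothing further is needed.
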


In particular, for each zero of $f*g$ in $S=x+y\s$ there exists a zero of $f$ or a zero of $g$ in $S$. However, \cite{zeros} presented examples of products $f*g$ whose zeros were not in one-to-one correspondence with the union of the zero sets of $f$ and $g$.
We now study the relation between the zeros of $f$ and those of $f^c$ and $f^s$. We need two preliminary steps.

\begin{lemma}\label{simmetriazerireale}
Let $\Omega\subseteq \mathbb{H}$ be a symmetric slice domain and let $f : \Omega \to \mathbb{H}$ be a regular function such that $f(\Omega_I) \subseteq L_I$ for all $I \in \s$. If $f(x_0+y_0I_0)=0$ for some $I_0\in\mathbb{S}$, then $f(x_0+y_0I)=0$ for all $I\in \mathbb{S}.$
\end{lemma}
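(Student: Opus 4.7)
The plan is to combine Theorem \ref{formula} with the slice-preserving hypothesis to force the affine representation of $f$ on the sphere $x_0+y_0\s$ to be real-valued. I will first dispense with the trivial case $y_0 = 0$, where the sphere degenerates to the point $\{x_0\}$. For $y_0 \neq 0$, axial symmetry of $\Omega$ ensures that the whole sphere $x_0+y_0\s$ is contained in $\Omega$, and Theorem \ref{formula} provides $b, c \in \hh$ such that $f(x_0+y_0 I) = b + Ic$ for every $I \in \s$.

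The main step will be to promote $b$ and $c$ from $\hh$ to $\rr$ using the hypothesis $f(\Omega_I) \subseteq L_I$. Evaluating this at $I$ and at $-I$ yields $b + Ic \in L_I$ and $b - Ic \in L_{-I} = L_I$; adding and subtracting gives $b \in L_I$ and $Ic \in L_I$, and multiplying the latter on the left by $-I$ shows $c \in L_I$ as well. I will then invoke the identity $\bigcap_{I \in \s} L_I = \rr$, which comes from the elementary observation that a quaternion lying in two distinct slices $L_I, L_{I'}$ with $I' \neq \pm I$ must be real (otherwise its imaginary part would be a nonzero $\rr$-multiple of both $I$ and $I'$). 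I expect this small intersection computation to be the only spot where any care is needed; everything else is automatic from Theorem \ref{formula}.

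Once $b, c \in \rr$ is established, the vanishing $0 = f(x_0 + y_0 I_0) = b + I_0 c$ forces $b = c = 0$ by the real linear independence of $1$ and $I_0$, and therefore $f(x_0 + y_0 I) = b + Ic = 0$ for every $I \in \s$, which is the conclusion. An alternative route would be to try to produce a second zero of $f$ on the sphere directly and then invoke Corollary \ref{affinezeros}, but the argument above reaches the same conclusion more economically.
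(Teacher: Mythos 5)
Your proof is correct, but it takes a genuinely different route from the paper's. The paper uses the intersection fact $\bigcap_{I\in\s} L_I = \rr$ only on the real axis, to conclude that $f$ is real-valued on $\Omega\cap\rr$; it then applies the (complex) Schwarz Reflection Principle to the holomorphic map $f_{I_0}:\Omega_{I_0}\to L_{I_0}$ to obtain $f(x+yI_0)=\overline{f(x-yI_0)}$, which produces a second zero $x_0-y_0I_0$ on the sphere, and concludes via Corollary \ref{affinezeros}. You instead work entirely on the single sphere $x_0+y_0\s$: writing $f(x_0+y_0I)=b+Ic$ by Theorem \ref{formula}, you use the hypothesis at $I$ and at $-I$ (noting $L_{-I}=L_I$) to place both $b$ and $c$ in every slice $L_I$, hence in $\rr$, after which the single vanishing $b+I_0c=0$ forces $b=c=0$ by the $\rr$-linear independence of $1$ and $I_0$. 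All the individual steps check out, including the intersection computation. Your argument is more elementary in that it needs neither the reflection principle nor Corollary \ref{affinezeros}, and it is purely pointwise on the sphere rather than relying on holomorphy of a slice restriction; it also yields slightly more as a by-product, namely that for such an $f$ the coefficients $b(x,y)$, $c(x,y)$ of Theorem \ref{formula} are real on every sphere in $\Omega$, not only on spheres containing a zero. The paper's route, by contrast, exploits the reality of $f$ on $\Omega\cap\rr$ (available because a slice domain meets the real axis) and fits the recurring pattern of reducing sphere statements to two-point statements via Corollary \ref{affinezeros} --- which is exactly the alternative you mention at the end of your write-up.
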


\begin{proof}
The fact that $f(\Omega_I)\subseteq L_I$ for all $I \in \mathbb{S}$ implies that $f(x)$ is real for all $x\in \Omega\cap \mathbb{R}$. We now have a holomorphic function $f_{I_0}:\Omega_{I_0} \to L_{I_0}$ mapping $\Omega \cap \rr$ to $\rr$. By the (complex) Schwarz Reflection Principle, $f(x+yI_0)=\overline{f(x-yI_0)}$ for all $x+yI_0 \in \Omega_{I_0}$.
Since $f(x_0+y_0I_0)=0$, we conclude that $f(x_0-y_0I_0)=0$ and Corollary \ref{affinezeros} allows us to deduce the thesis.
\end{proof}

\begin{lemma}\label{simmetrizzatareale}
Let $\Omega\subseteq \mathbb{H}$ be a symmetric slice domain, let $f : \Omega \to \mathbb{H}$ be a regular function and let $f^s$ be its symmetrization. Then $f^s(\Omega_I) \subseteq L_I$ for all $I \in \s$.
\end{lemma}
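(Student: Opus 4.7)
The plan is to read the conclusion directly off formula (\ref{simmetrizzata}) derived just above the statement. Fix any $I\in\s$ and, invoking the Splitting Lemma with some $J\in\s$, $J\perp I$, write $f_I(z)=F(z)+G(z)J$ with holomorphic $F,G:\Omega_I\to L_I$. Since $\Omega$ is symmetric, $\bar z\in\Omega_I$ whenever $z\in\Omega_I$, so $F(\bar z),G(\bar z)\in L_I$. Formula (\ref{simmetrizzata}) then gives
$$f_I*f^c_I(z)=F(z)\overline{F(\bar z)}+G(z)\overline{G(\bar z)},$$
and the right-hand side visibly lies in $L_I$ because $L_I\simeq\cc$ is a commutative subfield of $\hh$ stable under quaternionic conjugation (indeed $\overline{x+yI}=x-yI\in L_I$). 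Hence $f_I*f^c_I$ maps $\Omega_I$ into $L_I$.

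To finish, by formula (\ref{f^s}) we have $f^s={\rm ext}(f_I*f^c_I)$, so the Extension Lemma \ref{Extension Lemma} ensures $f^s|_{\Omega_I}=f_I*f^c_I$, whence $f^s(\Omega_I)\subseteq L_I$. Since $I\in\s$ was arbitrary, the claim follows for every slice. I do not foresee a serious obstacle: the statement is essentially a corollary of formula (\ref{simmetrizzata}) together with the algebraic closure properties of $L_I$. The only point worth checking is that the argument is legitimately slice-by-slice, which it is: the regular product $f*f^c=f^s$ is itself an intrinsically defined regular function on $\Omega$, so the splitting unit can be chosen freely on each slice and the computation above applies verbatim with any $I$ in place of the initially chosen one.
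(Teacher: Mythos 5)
Your proof is correct and is exactly the ``direct computation from equation (\ref{simmetrizzata})'' that the paper invokes: the restriction $f^s|_{\Omega_I}=f_I*f^c_I(z)=F(z)\overline{F(\bar z)}+G(z)\overline{G(\bar z)}$ lies in $L_I$ because $L_I$ is a subfield closed under conjugation, and your remark that the argument runs slice-by-slice since $f^s$ is intrinsically defined is the right point to check. Nothing further is needed.
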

\begin{proof}
It follows by direct computation from equation (\ref{simmetrizzata}).
\end{proof}

\begin{prop}\label{zericoniugata}
Let $\Omega\subseteq\mathbb{H}$ be a symmetric slice domain, let $f:\Omega\to\mathbb{H}$ be regular and choose $S=x_0+y_0\mathbb{S}\subset \Omega$. The zeros of $f$ in $S$ are in one-to-one correspondence with those of $f^c$. Furthermore, $f^s$ vanishes identically on $S$ if and only if $f^s$ has a zero in $S$, if and only if $f$ has a zero in $S$ (if and only if $f^c$ has a zero in $S$).
\end{prop}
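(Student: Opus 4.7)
The proposition contains two assertions: a one-to-one correspondence between the zeros of $f$ and $f^c$ on $S$, and a chain of equivalences between having a zero in $S$ for $f$, $f^c$, $f^s$, and $f^s$ vanishing identically on $S$. My plan is to exploit the trichotomy given by Corollary \ref{affinezeros} (the zero set on $S$ is either empty, a singleton, or the whole of $S$), together with the explicit splittings $f_I(z) = F(z) + G(z)J$ and $f_I^c(z) = \overline{F(\bar z)} - G(z)J$, Proposition \ref{formprod} and Corollary \ref{zeriprodotto} for the $*$-product, and the slice-preserving property of $f^s$ (Lemma \ref{simmetrizzatareale}).

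For the bijection I would start with the extreme case $f \equiv 0$ on $S$. By Corollary \ref{affinezeros}, this is equivalent to $f_I$ vanishing at both points of $S \cap L_I = \{x_0 + y_0 I, x_0 - y_0 I\}$, hence to $F$ and $G$ simultaneously vanishing at both $x_0 \pm y_0 I$; inspecting $f_I^c$, the same condition is equivalent to $f^c \equiv 0$ on $S$. Next I would prove that $f$ has \emph{some} zero in $S$ if and only if $f^c$ does. Given $f(q_0) = 0$, Proposition \ref{formprod} yields $f^s(q_0) = (f*f^c)(q_0) = 0$; Lemmas \ref{simmetrizzatareale} and \ref{simmetriazerireale} then force $f^s \equiv 0$ on $S$. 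Picking any $q \in S$ with $f(q) \neq 0$ (which exists unless $f \equiv 0$ on $S$, the case already treated), Corollary \ref{zeriprodotto} gives $f^c(f(q)^{-1} q f(q)) = 0$; since quaternionic conjugation preserves the sphere $x_0 + y_0 \s$, this produces a zero of $f^c$ in $S$. The reverse implication follows from the identity $f^{cc} = f$, verified by direct computation on splittings. By the trichotomy, both zero sets on $S$ fall into the same case (empty, singleton, or all of $S$), which yields the claimed one-to-one correspondence.

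For the second part, Lemmas \ref{simmetrizzatareale} and \ref{simmetriazerireale} immediately give $f^s \equiv 0$ on $S$ if and only if $f^s$ has any zero in $S$, and Proposition \ref{formprod} gives the implication ``$f$ has a zero in $S$ $\Rightarrow$ $f^s$ has a zero in $S$''; the equivalence with $f^c$ then follows from the bijection part. The step I expect to be the main obstacle is the reverse implication ``$f^s$ has a zero in $S$ $\Rightarrow$ $f$ has a zero in $S$''. I would argue by contradiction: if $f$ had no zero on $S$, then applying Corollary \ref{zeriprodotto} to $f^s = f*f^c$ would force $f^c(f(q)^{-1} q f(q)) = 0$ for every $q \in S$, so $f^c$ would have a zero in $S$; but by the bijection already established this would then produce a zero of $f$ in $S$, a contradiction. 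One must check that this contradiction argument is not circular: the bijection was proved using only Proposition \ref{formprod} and the splitting formulas, so invoking it here is legitimate.
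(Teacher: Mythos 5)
Your proof is correct and follows essentially the same route as the paper's: Proposition \ref{formprod} and Corollary \ref{zeriprodotto} to pass between zeros of $f$, $f^c$ and $f^s$, Lemmas \ref{simmetrizzatareale} and \ref{simmetriazerireale} to spread a single zero of $f^s$ over the whole sphere, the explicit splitting formulas for the case $f\equiv 0$ on $S$, and $(f^c)^c=f$ for the reverse direction. The only differences are organizational (you treat the case $f\equiv 0$ on $S$ up front and make the trichotomy from Corollary \ref{affinezeros} explicit, where the paper folds these into a single case split at $\bar q_0$), and your check that the final contradiction argument is not circular is sound.
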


\begin{proof}
If $q_0=x_0+y_0I_0$ is a zero of $f$ then $f^s = f*f^c$ vanishes at $q_0$ by Proposition \ref{formprod}. According to Lemmas \ref{simmetriazerireale} and \ref{simmetrizzatareale}, $f^s(x_0+y_0I) = 0$ for all $I \in \s$. 

By Corollary \ref{zeriprodotto}, the fact that $f^s(\bar q_0) = f^s(x_0-y_0I_0) = 0$ implies that either $f(\bar q_0)=0$ or $f^c(f(\bar q_0)^{-1} \bar q_0 f(\bar q_0))=0$. In the first case we conclude that $f$ vanishes identically on $S$, which implies that $f^c$ vanishes on $S$ because of formula (\ref{formconj}). In the second case, $f^c$ vanishes at the point $f(\bar q_0)^{-1} \bar q_0 f(\bar q_0) = x_0 - y_0 [f(\bar q_0)^{-1} I_0 f(\bar q_0)] \in S$. 

We have proven that if $f$ has a zero in $S$ then $f^s$ has a zero in $S$, which leads to the vanishing of $f^s$ on the whole $S$, which implies the existence of a zero of $f^c$ in $S$. Since $(f^c)^c=f$, exchanging the roles of $f$ and $f^c$ proves the thesis.
\end{proof}


\section{Topological properties of the zero set}\label{sectionstructure}

We now study the distribution of the zeros of regular functions on symmetric slice domains. In order to obtain a full characterization of the zero set of a regular function, we first deal with a special case that will be crucial in the proof of the main result.

\begin{lemma}\label{zeriseriereale}
Let $\Omega\subseteq \mathbb{H}$ be a symmetric slice domain and let $f : \Omega \to \mathbb{H}$ be a regular function such that $f(\Omega_I) \subseteq L_I$ for all $I \in \s$. If $f\not\equiv 0$, the zero set of $f$ is either empty or it is the union of isolated points (belonging to $\mathbb{R}$) and isolated 2-spheres of the type $x+y\mathbb{S}$.
\end{lemma}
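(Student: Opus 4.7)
My approach is to combine Lemma~\ref{simmetriazerireale} with the classical complex identity principle and the quaternionic Identity Principle (Theorem~\ref{identity}) to rule out any accumulation within $\z_f$. First, Lemma~\ref{simmetriazerireale} says that $f$ vanishes on an entire $2$-sphere $x+y\s$ as soon as it vanishes at one of its points. Consequently $\z_f$ is axially symmetric and decomposes into the disjoint union of real points and full spheres of the form $x+y\s$ (with $y\neq 0$); it remains to establish that each such component is isolated in $\Omega$.

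Suppose, toward a contradiction, that a component $S$ of $\z_f$ (a real point, or a sphere $x+y\s$) is not isolated in $\Omega$. Then there exists a sequence $\{p_n\}\subset \z_f\setminus S$ converging to some $p\in S$. Writing $p_n=x_n+y_nI_n$ and $p=x+yI_0$ with $y_n,y\geq 0$ and $I_n,I_0\in\s$, the convergence $p_n\to p$ gives $x_n\to x$ and $y_n\to y$. By the axial symmetry of $\z_f$, the points $q_n:=x_n+y_nI_0$ all lie in $\z_f\cap L_{I_0}$, and they converge to $q:=x+yI_0=p$ inside the complex line $L_{I_0}$. Moreover $q_n\neq q$ for every $n$: the equality $q_n=q$ would force $(x_n,y_n)=(x,y)$, putting $p_n$ on the same sphere as $p$ (or equal to $p$ when $y=0$), contradicting $p_n\notin S$.

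Hence $q=p\in\Omega_{I_0}$ is a non-isolated zero of the holomorphic function $f_{I_0}:\Omega_{I_0}\to L_{I_0}$, and the classical identity principle yields $f_{I_0}\equiv 0$ on the connected domain $\Omega_{I_0}$. Theorem~\ref{identity} then forces $f\equiv 0$ throughout $\Omega$, contradicting our hypothesis. The only genuine subtlety is ensuring that the projected sequence $\{q_n\}$ in $L_{I_0}$ stays away from its limit: this rests on the uniqueness of the decomposition $q=a+bJ$ with $a,b\in\rr$, $b\geq 0$, $J\in\s$, which guarantees that $q_n\neq p$ in $L_{I_0}$ correctly encodes the hypothesis $p_n\notin S$ in $\hh$.
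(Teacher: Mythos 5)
Your proof is correct and follows essentially the same route as the paper: both use Lemma~\ref{simmetriazerireale} to reduce the zero set to real points and full spheres, then observe that each component meets a fixed slice $L_{I_0}$ in at most two points, so that non-isolation of a component would produce an accumulation of zeros of $f_{I_0}$ in $\Omega_{I_0}$ and force $f\equiv 0$ via the identity principle. Your version merely makes explicit, via the projected sequence $q_n=x_n+y_nI_0$, the passage from accumulation in $\hh$ to accumulation in the slice, which the paper leaves implicit.
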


\begin{proof}
We know from Lemma \ref{simmetriazerireale} that the zero set of such an $f$ consists of real points and 2-spheres of the type $x+y\mathbb{S}$. Now choose $I$ in $\mathbb{S}$ and notice that the intersection of $L_I$ with the zero set of $f$ consists of all the real zeros of $f$ and of exactly two zeros for each sphere $x+y\mathbb{S}$ on which $f$ vanishes (namely, $x+yI$ and $x-yI$). If $f\not \equiv 0$ then, by Theorem \ref{identity}, the zeros of $f$ in $L_I$ must be isolated. Hence the zero set of $f$ consists of isolated real points and isolated 2-spheres.
\end{proof}

We now state and prove the result on the topological structure of the zero set of regular functions.

\begin{theorem}[Structure of the Zero Set]\label{structurethm}
Let $\Omega\subseteq\mathbb{H}$ be a symmetric slice domain and let  $f:\Omega\to\mathbb{H}$ be a regular function. If $f$ does not vanish identically, then the zero set of $f$ consists of isolated points or isolated 2-spheres of the form $x+y\mathbb{S}$.
\end{theorem}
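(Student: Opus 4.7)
The plan is to reduce the general case to the special case already handled by Lemma \ref{zeriseriereale}, applied to the symmetrization $f^s$ rather than to $f$ itself. The key point is that $f^s$ satisfies the hypothesis $f^s(\Omega_I) \subseteq L_I$ by Lemma \ref{simmetrizzatareale}, and its zeros control the zeros of $f$ through Proposition \ref{zericoniugata}.

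First I would show that $f^s \not\equiv 0$ on $\Omega$. If it were, then by Proposition \ref{zericoniugata} every $2$-sphere $x+y\s \subset \Omega$ (with $y\neq 0$) as well as every real point in $\Omega \cap \rr$ would contain a zero of $f$. Fixing any $I \in \s$, this produces zeros of $f$ accumulating in $\Omega_I$ (for instance along any real segment inside $\Omega$), so by the Identity Principle \ref{identity} we would get $f \equiv 0$, contradicting the hypothesis.

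Next I would apply Lemma \ref{zeriseriereale} to $f^s$: its zero set $\z_{f^s}$ is a (possibly empty) union of isolated real points and isolated spheres of the form $x+y\s$. By Proposition \ref{zericoniugata} we have the inclusion $\z_f \subseteq \z_{f^s}$, but the two sets need not coincide, and this is the main subtlety: on a sphere $S = x+y\s$ where $f^s$ vanishes identically, $f$ itself might vanish only at one point of $S$. The dichotomy is resolved by Corollary \ref{affinezeros}: as soon as $f$ has two distinct zeros on $S$, it must vanish on all of $S$. Hence on each such sphere $f$ has either no zeros, exactly one zero, or $f \equiv 0$ on $S$.

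Finally I would assemble the conclusion. Each isolated real zero of $f^s$ that also belongs to $\z_f$ is an isolated point of $\z_f$, because a punctured neighborhood of it in $\Omega$ contains no zeros of $f^s$ hence none of $f$. Each isolated sphere $S$ of $\z_{f^s}$ has an open neighborhood $U$ in $\Omega$ with $\z_{f^s} \cap U = S$, so $\z_f \cap (U \setminus S) = \emptyset$; in view of the trichotomy above, the contribution of $S$ to $\z_f$ is either empty, a single isolated point, or the whole isolated sphere $S$. The main obstacle is precisely this last step, namely tracking how $\z_f$ sits inside $\z_{f^s}$ sphere by sphere; once Corollary \ref{affinezeros} is invoked, the isolation of $\z_f$ is inherited directly from that of $\z_{f^s}$.
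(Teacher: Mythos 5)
Your proposal is correct and follows essentially the same route as the paper's proof: pass to the symmetrization $f^s$, apply Lemma \ref{zeriseriereale} via Lemma \ref{simmetrizzatareale}, and transfer the conclusion back to $f$ through Proposition \ref{zericoniugata}. You merely spell out two steps the paper leaves implicit, namely the verification that $f^s\not\equiv 0$ and the explicit appeal to Corollary \ref{affinezeros} for the ``one point or whole sphere'' dichotomy.
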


\begin{proof}
Consider the symmetrization $f^s$ of $f$: by Lemma \ref{simmetrizzatareale}, $f^s$ fulfills the hypotheses of Lemma \ref{zeriseriereale}. Hence the zero set of $f^s$ consists of isolated real points or isolated 2-spheres. According to Proposition \ref{zericoniugata}, the real zeros of $f$ and $f^s$ are exactly the same. Furthermore, each 2-sphere in the zero set of $f^s$ corresponds either to a 2-sphere of zeros, or to a single zero of $f$. This concludes the proof.
\end{proof}

As an immediate consequence of the previous result, we can strengthen the Identity Principle \ref{identity}.

\begin{theorem}[Strong Identity Principle]
Let $f, g$ be regular functions on a symmetric slice domain $\Omega$.  If there exists a $2$-sphere (or a singleton) $S=x+y\mathbb{S}\subset \Omega$ such that the zeros of $f-g$ contained in $\Omega \setminus S$ accumulate to a point of $S$, then $f\equiv g$ on the whole $\Omega$.
\end{theorem}


\section{The maximum and Minimum Modulus Principles}\label{sectionminimum}

The Maximum Modulus Principle is a consequence of the analogous result for holomorphic functions. Our proof uses the Identity Principle \ref{identity}, thus we must work on a slice domain.

\begin{theorem}[Maximum Modulus Principle] \label{maximum}
Let $\Omega$ be a slice domain and let $f : \Omega \to \hh$ be regular. If $|f|$ has a relative maximum at $p \in \Omega$, then $f$ is constant.
\end{theorem}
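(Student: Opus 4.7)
The plan is to reduce to the classical complex maximum modulus principle applied on $\Omega_I$, for some $I \in \s$ chosen so that $p \in L_I$. Since $\Omega$ is a slice domain, $\Omega_I$ is a connected open subset of $L_I \simeq \cc$ containing $p$. If $f(p)=0$, the local maximality of $|f|$ at $p$ forces $f \equiv 0$ on a neighborhood of $p$; in particular $f_I$ vanishes on a neighborhood of $p$ in $\Omega_I$, and the Identity Principle \ref{identity} yields $f \equiv 0$ on $\Omega$.

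Assume now $f(p) \neq 0$. Fix $J \in \s$ with $I \perp J$ and decompose $f_I(z) = F(z) + G(z)J$, where $F, G : \Omega_I \to L_I$. The condition $\bar\partial_I f_I \equiv 0$, combined with the orthogonal decomposition $\hh = L_I \oplus L_I J$, implies that $F$ and $G$ are holomorphic, and $|f_I(z)|^2 = |F(z)|^2 + |G(z)|^2$. Introduce the $L_I$-valued holomorphic auxiliary function
\[
\varphi(z) = \overline{F(p)}\, F(z) + \overline{G(p)}\, G(z).
\]
The Cauchy--Schwarz inequality in $L_I^2 \simeq \cc^2$ gives
\[
|\varphi(z)| \leq \sqrt{|F(p)|^2+|G(p)|^2}\,\sqrt{|F(z)|^2+|G(z)|^2} = |f(p)|\,|f_I(z)|,
\]
with equality exactly when $(F(z), G(z))$ is an $L_I$-scalar multiple of $(F(p), G(p))$. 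Since $|f_I(z)| \leq |f(p)|$ near $p$, this gives $|\varphi(z)| \leq |f(p)|^2 = \varphi(p)$ on a neighborhood of $p$. The classical maximum modulus principle applied to $\varphi$ on the connected domain $\Omega_I$ then forces $\varphi \equiv |f(p)|^2$ on $\Omega_I$.

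Combining $|\varphi(z)| = |f(p)|^2$ with the Cauchy--Schwarz estimate shows that equality is attained near $p$, so that $(F(z), G(z)) = \lambda(z)\,(F(p), G(p))$ for some $\lambda(z) \in L_I$. Plugging back into $\varphi \equiv |f(p)|^2$ yields $\lambda \equiv 1$. Hence $F$ and $G$ are constant on a neighborhood of $p$, and the classical identity principle on $\Omega_I$ extends this to all of $\Omega_I$, giving $f_I \equiv f(p)$. Finally, the Identity Principle \ref{identity} applied to $f$ and the regular constant function $q \mapsto f(p)$ yields $f \equiv f(p)$ on $\Omega$. The main subtlety is the splitting $f_I = F + GJ$: the paper's Splitting Lemma is stated for symmetric slice domains, but on any slice domain one still obtains the splitting directly from the regularity condition $\bar\partial_I f_I \equiv 0$ applied componentwise to the two $L_I$-parts of $f_I$.
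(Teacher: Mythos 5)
Your proof is correct, and it follows the same skeleton as the paper's: split $f_I = F + GJ$ into two $L_I$-valued holomorphic components, reduce to the classical maximum modulus principle on the domain $\Omega_I$, conclude that $f_I$ is constant, and finish with the Identity Principle \ref{identity}. The difference lies in how the reduction to the one-variable complex result is carried out. The paper first normalizes by replacing $f$ with $q \mapsto f(q)\overline{f(p)}$ (right multiplication by a constant preserves regularity and rescales $|f|$), so that $f(p)$ becomes a positive real number; then $G(p)=0$, the inequality $|F(p)|^2 = |f_I(p)|^2 \geq |F(z)|^2 + |G(z)|^2 \geq |F(z)|^2$ shows directly that $|F|$ has a relative maximum at $p$, and once $F \equiv f(p)$ one reads off $|G|^2 = |f_I|^2 - |F|^2 \leq 0$, i.e.\ $G \equiv 0$ near $p$. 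You instead keep $f(p)$ arbitrary and apply the maximum principle to the holomorphic pairing $\varphi(z) = \overline{F(p)}F(z) + \overline{G(p)}G(z)$, using Cauchy--Schwarz and its equality case to force $(F,G)$ to be the constant $(F(p),G(p))$; this is the standard vector-valued maximum-principle argument, slightly longer but it avoids having to justify that right multiplication by $\overline{f(p)}$ preserves both regularity and the local maximality of the modulus. Your closing remark is also well taken: the theorem is stated for slice domains that need not be symmetric, so one cannot invoke the Splitting Lemma as stated, but the splitting $f_I = F + GJ$ with $F,G$ holomorphic does follow directly from $\bar\partial_I f_I \equiv 0$ and the decomposition $\hh = L_I \oplus L_I J$, which is exactly what the paper implicitly uses as well.
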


\begin{proof}
If $f(p) = 0$ then the thesis is obvious. Else we may suppose $f(p) \in \rr, f(p)>0$, possibly multiplying $f$ by $\overline{f(p)}$. Let $I,J \in \s$ be such that $p \in L_I$ and $I \perp J$; let $F,G : \Omega_I \to L_I$ be holomorphic functions such that $f_I = F+GJ$. Then $|F(p)|^2 = |f_I(p)|^2 \geq |f_I(z)|^2 = |F(z)|^2 + |G(z)|^2 \geq |F(z)|^2$ for all $z$ in a neighborhood $U_I$ of $p$ in $\Omega_I$. Hence $|F|$ has a relative maximum at $p$ and the Maximum Modulus Principle for holomorphic functions of one complex variable allows us to conclude that $F$ is constant. Namely, $F \equiv f(p)$.

Now, $|G(z)|^2 = |f_I(z)|^2-|F(z)|^2 = |f_I(z)|^2-|f_I(p)|^2 \leq  |f_I(p)|^2-|f_I(p)|^2 = 0$ for all $z \in U_I$. Hence $f_I = F \equiv f(p)$ in $U_I$. Since $\Omega$ is a slice domain, the Identity Principle \ref{identity} allows us to conclude that $f \equiv f(p)$ in $\Omega$.
\end{proof}

The Minimum Modulus Principle proven \cite{open} for power series extends to all regular functions on symmetric slice domains with a very similar proof, which  we repeat for the sake of completeness.
We first find an alternative expression of the regular reciprocal $f^{-*}$.

\begin{prop}\label{reciprocalformula}
Let $f$ be a regular function on a symmetric slice domain $\Omega$. Then, for all $q \in \Omega \setminus \z_{f^s}$,
\begin{equation}
f^{-*}(q) = \frac{1}{f(T_f(q))},
\end{equation}
where $T_f : \Omega \setminus \z_{f^s} \to \Omega \setminus \z_{f^s}$ is defined by $T_f(q) = f^c(q)^{-1} q f^c(q)$. Furthermore, $T_f$ and $T_{f^c}$ are mutual inverses so that $T_f$ is a diffeomorphism.
\end{prop}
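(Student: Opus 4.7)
The plan is to derive the formula for $f^{-*}$ directly from the identity $f^s = f^c * f$ combined with Proposition~\ref{formprod}. First I would verify that every object is well-defined: for $q \in \Omega \setminus \z_{f^s}$, Proposition~\ref{zericoniugata} guarantees that the whole sphere $x + y\s$ through $q$ avoids both $\z_f$ and $\z_{f^c}$, so in particular $f^c(q) \neq 0$ and $T_f(q) = f^c(q)^{-1} q\, f^c(q)$ is defined. Since conjugation by a nonzero quaternion preserves modulus and real part, $T_f(q)$ lies on $x + y\s$ and hence in $\Omega \setminus \z_{f^s}$, so $f(T_f(q)) \neq 0$. The same reasoning, applied to $f^c$, shows that $T_{f^c}$ also maps $\Omega \setminus \z_{f^s}$ into itself.

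Next I would apply Proposition~\ref{formprod} to $f^s = f^c * f$ at $q$, obtaining
\[
f^s(q) = f^c(q)\, f\!\left(f^c(q)^{-1} q\, f^c(q)\right) = f^c(q)\, f(T_f(q)).
\]
Both factors on the right are nonzero, so this rearranges to $f^c(q) = f^s(q)\, f(T_f(q))^{-1}$. Substituting into the definition $f^{-*}(q) = f^s(q)^{-1} f^c(q)$ yields $f^{-*}(q) = f(T_f(q))^{-1}$, which is the claimed formula.

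For the second assertion I would compute directly, using $(f^c)^c = f$ so that $T_{f^c}(p) = f(p)^{-1} p\, f(p)$. Then
\[
T_{f^c}(T_f(q)) = f(T_f(q))^{-1}\, f^c(q)^{-1}\, q\, f^c(q)\, f(T_f(q)) = f^s(q)^{-1}\, q\, f^s(q),
\]
where the last equality uses $f^c(q)\, f(T_f(q)) = f^s(q)$ from above. The crucial observation is that $f^s(q)$ commutes with $q$: by Lemma~\ref{simmetrizzatareale}, if $q \in L_I$ then $f^s(q) \in L_I$, and $L_I \simeq \cc$ is commutative. Hence $T_{f^c}(T_f(q)) = q$; an identical computation with the roles of $f$ and $f^c$ swapped (and the identity $(f^c)^s = f^s$) gives $T_f \circ T_{f^c} = \mathrm{id}$. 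Smoothness of both $T_f$ and $T_{f^c}$ follows from smoothness of $f, f^c$ (regular, hence $C^\infty$ on $\Omega$) together with the fact that neither vanishes on $\Omega \setminus \z_{f^s}$, so $T_f$ is a diffeomorphism.

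I expect the only conceptually nontrivial step to be invoking Lemma~\ref{simmetrizzatareale} at exactly the right moment to collapse $f^s(q)^{-1} q\, f^s(q)$ to $q$; everything else is a mechanical application of the product formula of Proposition~\ref{formprod}.
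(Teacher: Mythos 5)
Your proposal is correct and follows essentially the same route as the paper's proof: apply Proposition~\ref{formprod} to $f^s = f^c * f$ to get $f^s(q) = f^c(q)\,f(T_f(q))$, cancel to obtain $f^{-*}(q) = f(T_f(q))^{-1}$, and use Lemma~\ref{simmetrizzatareale} to make $f^s(q)$ commute with $q$ in the composition $T_{f^c}\circ T_f$. Your treatment is if anything slightly more explicit than the paper's on the well-definedness of $T_f$ and on checking both composition orders.
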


\begin{proof}
As explained in Proposition \ref{zericoniugata}, if $f^s(q) \neq 0$ then $f^c(q)\neq 0$. Hence $T_f$ is well defined on $\Omega \setminus \z_{f^s}$. According to Proposition \ref{formprod}, $f^c(q) * g(q) = f^c(q) g(T_f(q))$. We compute:
$$f^{-*}(q) = f^s(q)^{-1} f^c(q) = \left[ f^c*f(q) \right]^{-1} f^c(q) =$$ 
$$= \left[f^c(q) f(T_f(q))\right]^{-1} f^c(q) = f(T_f(q))^{-1} f^c(q)^{-1} f^c(q) = f(T_f(q))^{-1}.$$
Moreover, $T_f : \Omega \setminus \z_{f^s} \to \hh$ maps any 2-sphere (or real singleton) $x+y\s$ to itself. In particular $T_f(\Omega \setminus \z_{f^s}) \subseteq \Omega \setminus \z_{f^s}$ (indeed, $\z_{f^s}$ is symmetric as explained in Proposition \ref{zericoniugata}).
The conjugacy operation is an involution, i.e. $(f^c)^c = f$; thus $T_{f^c}(q) = f(q)^{-1} q f(q)$. For all $q \in \Omega \setminus \z_{f^s}$, set $p = T_f(q)$ and notice that
$$T_{f^c} \circ T_f(q) = T_{f^c}(p) = f(p)^{-1} p f(p) =$$ 
$$= f(p)^{-1} \left[f^c(q)^{-1} q f^c(q)\right] f(p) = \left [f^c(q)f(p)\right]^{-1} q \left [f^c(q)f(p)\right]$$ 
where
$$f^c(q)f(p) = f^c(q) f(f^c(q)^{-1}q f^c(q)) = f^c* f (q) = f^s(q).$$ 
Hence 
$$T_{f^c} \circ T_f(q) =  f^s(q)^{-1}  q  f^s(q) = q,$$
where the last equality holds because $f^s(q)$ and $q$ commute, since they always lie in the same complex line by Lemma \ref{simmetrizzatareale}.
\end{proof}

The regular reciprocal $f^{-*}$ allows the proof of the following theorem.

\begin{theorem}[Minimum Modulus Principle]\label{minimum}
Let $\Omega$ be a symmetric slice domain and let $f : \Omega \to \hh$ be a regular function. If $|f|$ has a local minimum point $p\in \Omega$ then either $f(p)=0$ or $f$ is constant.
\end{theorem}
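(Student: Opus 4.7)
The plan is to apply the Maximum Modulus Principle (Theorem \ref{maximum}) to the regular reciprocal $f^{-*}$, exploiting the formula $f^{-*}(q) = 1/f(T_f(q))$ from Proposition \ref{reciprocalformula}. The case $f(p)=0$ being trivial, I will assume $f(p) \neq 0$ and aim to conclude that $f$ is constant.

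The first and most delicate step is to verify that $f^s$ is nonzero in a neighborhood of $p$, so that $f^{-*}$ is actually defined there. Writing $p = x_0 + y_0 I_p$ and using Theorem \ref{formula}, $f$ on the sphere $S = x_0 + y_0 \s$ has the form $f(x_0+y_0 I) = b + Ic$ for constants $b, c \in \hh$. A direct computation will give
\begin{equation*}
|b + Ic|^2 = |b|^2 + |c|^2 - 2 \langle I, \mathrm{Im}(c\bar b)\rangle,
\end{equation*}
i.e.\ the restriction to $\s \subset \mathrm{Im}\,\hh$ of an affine function on $\rr^3$. Every local extremum of such a function on the sphere is automatically global, so the local minimum of $|f|$ at $p$ forces $|f|$ to take its global minimum on $S$ at $p$, with value $|f(p)| > 0$; in particular $f$ has no zero on $S$. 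By Proposition \ref{zericoniugata} neither does $f^s$, so $p \in \Omega \setminus \z_{f^s}$; combining with the Structure of the Zero Set (Theorem \ref{structurethm}) applied to $f^s$, the set $\Omega \setminus \z_{f^s}$ is itself a slice domain.

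Next I would exhibit a local maximum of $|f^{-*}|$ at the point $p'' = T_{f^c}(p) = f(p)^{-1} p f(p)$; by Proposition \ref{reciprocalformula}, $T_f(p'') = p$. Choosing a neighborhood $U$ of $p$ on which $|f| \geq |f(p)|$ and using continuity of $T_f$ at $p''$ to pick a neighborhood $W$ of $p''$ with $T_f(W) \subseteq U$, I obtain
\begin{equation*}
|f^{-*}(q)| = \frac{1}{|f(T_f(q))|} \leq \frac{1}{|f(p)|} = |f^{-*}(p'')|
\end{equation*}
for all $q \in W$. Applying Theorem \ref{maximum} to $f^{-*}$ on the slice domain $\Omega \setminus \z_{f^s}$ then forces $f^{-*}$ to equal a constant $c \in \hh$; since $f * f^{-*} \equiv 1$ and right $*$-multiplication by a constant agrees with pointwise multiplication, $f \cdot c \equiv 1$, so $f \equiv c^{-1}$ on $\Omega \setminus \z_{f^s}$ and, by continuity, on all of $\Omega$.

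The main obstacle is precisely the first step: a priori $f$ could have an isolated zero at some point of $S$ other than $p$, in which case Proposition \ref{zericoniugata} would force $f^s$ to vanish identically on $S$, including at $p$, and $f^{-*}$ would not be defined at $p$ at all. It is precisely the affine-linear behavior of $|f|^2$ on spheres delivered by the representation formula of Theorem \ref{formula} that rules this out and makes the reciprocal approach available.
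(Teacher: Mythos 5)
Your proposal is correct and follows essentially the same route as the paper: rule out zeros of $f$ on the sphere $S$ through $p$ via the affine behavior of $I \mapsto f(x+yI)$ from Theorem \ref{formula}, conclude that $f^{-*}$ is defined near $S$, transfer the local minimum of $|f|$ at $p$ to a local maximum of $|f^{-*}|$ at $T_{f^c}(p)$ using Proposition \ref{reciprocalformula}, and invoke the Maximum Modulus Principle. The only (harmless) differences are that you spell out the computation of $|b+Ic|^2$ and the verification that $\Omega\setminus\z_{f^s}$ is a slice domain, and that you pass from ``$f^{-*}$ constant'' to ``$f$ constant'' via $f*f^{-*}\equiv 1$ and continuity where the paper uses $T_f$ being a bijection and the Identity Principle.
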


\begin{proof}
Consider a regular $f : \Omega \to \hh$ whose modulus has a minimum point $p \in \Omega$ with $f(p) \neq 0$. Such an $f$ does not vanish on the sphere $S = x+y\s$ through $p$. Indeed, if $f$ vanished at a point $p' \in S$ then $|f_{|_S}|$ would have a relative minimum at two distinct points: $p$ and $p'$. On the contrary, $|f_{|_S}|$ has one global minimum, one global maximum and no other extremal point: by Theorem \ref{formula}, $I \mapsto f(x+yI)$ is affine. Hence $f$ does not have zeroes in $S$, nor does $f^s$. Hence the domain $\Omega' = \Omega \setminus \z_{f^s}$ of the regular reciprocal $f^{-*}$ includes $S$. Thanks to Proposition \ref{reciprocalformula},
$$|f^{-*}(q)| = \frac{1}{|f(T_f(q))|}$$ 
for all $q \in \Omega'$. By Proposition \ref{reciprocalformula}, if $|f|$ has a minimum at $p \in x+y\s \subseteq \Omega'$ then $|f \circ T_f|$ has a minimum at $p' = T_{f^c}(p) \in \Omega'$. As a consequence, $|f^{-*}|$ has a maximum at $p'$. By the Maximum Modulus Principle \ref{maximum}, $f^{-*}$ is constant on $\Omega'$. This implies that $f$ is constant in $\Omega'$ and, thanks to the Identity Principle \ref{identity}, in the whole domain $\Omega$.
\end{proof}

As in the case of power series, the Minimum Modulus Principle is the basis for the proof of the Open Mapping Theorem.


\section{The Open Mapping Theorem}\label{sectionopen}

We are now ready to extend the Open Mapping Theorem, proven in \ref{open} for power series,
to all regular functions on symmetric slice domains. We begin with the following result.

\begin{theorem}\label{circularopen}
Let $\Omega$ be a symmetric slice domain and let $f : \Omega \to \hh$ be a non-constant regular function.  If $U$ is a symmetric open subset of $\Omega$, then $f(U)$ is open. In particular, the image $f(\Omega)$ is open.
\end{theorem}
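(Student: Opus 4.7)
Fix an arbitrary $q_0 \in U$, set $p_0 = f(q_0)$, and let $S_0 = x_0 + y_0 \s$ be the $2$-sphere (or, if $y_0 = 0$, the real singleton) through $q_0$. The symmetry of $U$ forces $S_0 \subseteq U$. The goal is to exhibit $\epsilon > 0$ such that $B(p_0, \epsilon) \subseteq f(U)$; the arbitrariness of $q_0$ will then yield openness of $f(U)$.

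The strategy is to combine the Structure Theorem \ref{structurethm} applied to $g = f - p_0$ with the Minimum Modulus Principle \ref{minimum}. By the Structure Theorem, $\z_g$ is a union of isolated points and isolated $2$-spheres; by Corollary \ref{affinezeros}, the component $C_0$ of $\z_g$ meeting $S_0$ is either $\{q_0\}$ or $S_0$ itself, so in either case $C_0 \subseteq S_0$. Isolation of $C_0$, closedness of $\z_g$ in $\Omega$, and compactness of $S_0$ together force $d(S_0, \z_g \setminus C_0) > 0$. I would then choose $r > 0$ strictly less than this distance, and small enough that the tubular neighborhood $T = \{q \in \hh : d(q, S_0) < r\}$ has $\overline T \subset U$. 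This ensures $\partial T \cap \z_g = \emptyset$, so the continuous function $|g|$ attains a strictly positive minimum $m$ on the compact set $\partial T$.

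For any $w \in \hh$ with $|w - p_0| < m/2$, the regular function $h = f - w = g + (p_0 - w)$ satisfies $|h(q_0)| < m/2$ and, by the triangle inequality, $|h(q)| > m/2$ for every $q \in \partial T$. Hence the minimum of $|h|$ on the compact set $\overline T$ is attained at an interior point $p^* \in T$, which is automatically a local minimum of $|h|$ in $\Omega$. The Minimum Modulus Principle then forces $h(p^*) = 0$, because the alternative $h \equiv \mathrm{const}$ would make $f$ constant on $\Omega$, contradicting the hypothesis. We conclude $w = f(p^*) \in f(T) \subseteq f(U)$, so $B(p_0, m/2) \subseteq f(U)$, as wanted.

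The main obstacle is the first step: extracting from the Structure Theorem that $d(S_0, \z_g \setminus C_0) > 0$, so the tube $T$ can be shrunk to make $|g|$ uniformly bounded away from zero on $\partial T$. Without this strictly positive lower bound, the triangle inequality could not produce an interior minimum of $|h|$ and the Minimum Modulus Principle could not be invoked. Once the lower bound is secured, the remainder is a standard Rouch\'e-type deduction.
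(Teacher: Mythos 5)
Your proof is correct and follows essentially the same route as the paper's: a symmetric tube $T(S,r)$ around the sphere through $q_0$, a triangle-inequality (Rouch\'e-type) estimate separating $|f-w|$ on the tube's boundary from its value at $q_0$, and the Minimum Modulus Principle to force an interior zero of $f-w$. The only difference is that you spell out, via the Structure Theorem and compactness, the existence of the radius $r$ for which the zeros of $f-p_0$ in $\overline{T}$ all lie on $S$ --- a step the paper asserts without elaboration.
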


\begin{proof}
Let $p_0 \in f(U)$. Choose $q_0 = x_0+y_0 I \in U$ such that $f(q_0)= p_0$, so that $f(q) - p_0$ has a zero on $S=x_0+y_0 \s \subseteq U$. For $r>0$, consider the symmetric neighborhood of $S$ defined by $T(S,r) = \{q \in \hh : d(q,S) < r \}$. There exists $r>0$ such that $\overline{T(S,r)} \subseteq U$ and $f(q) - p_0 \neq 0$ for all $q \in \overline{T(S,r)} \setminus S$. Let $\varepsilon >0$ be such that $|f(q) - p_0| \geq 3 \varepsilon$ for all $q$ such that $d(q, S) = r$. For all $p \in B(p_0,\varepsilon)$  and all $q$ such that $d(q, S) = r$ we get $$|f(q) -p| \geq |f(q) -p_0| - |p-p_0| \geq 3 \varepsilon -\varepsilon = 2 \varepsilon > \varepsilon \geq |p_0-p| = |f(q_0)-p|. $$ Thus $|f(q) -p|$ must have a local minimum point in $T(S,r)$. By the Minimum Modulus Principle \ref{minimum}, there exists $q \in T(S,r)$ such that $f(q) - p = 0$.
\end{proof}

\begin{definition}
Let $\Omega$ be a symmetric slice domain and let $f : \Omega \to \hh$ be a regular function. We define the \textnormal{degenerate set} of $f$ as the union $D_{f}$ of the 2-spheres $x+y\s$ (with $y \neq 0$) such that $f_{|_{x+y\s}}$ is constant.
\end{definition}

Theorem \ref{circularopen} allows the study of the topological properties of the degenerate set.

\begin{theorem}
Let $\Omega$ be a symmetric slice domain and let $f : \Omega \to \hh$ be a non-constant regular function. The degenerate set $D_f$ is closed in $\Omega\setminus \rr$ and it has empty interior.
\end{theorem}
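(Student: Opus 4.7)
My plan is to extract both statements from the affine representation $f(x+yI) = b(x,y) + I c(x,y)$ of Theorem \ref{formula}, together with the $C^\infty$ regularity of $b$ and $c$ recalled just below that theorem. The useful reformulation is that, for $y \neq 0$, a sphere $x+y\s \subset \Omega$ lies in $D_f$ precisely when $c(x,y) = 0$. To phrase this cleanly I introduce the continuous and open projection
\[
\pi : \hh \setminus \rr \longrightarrow \rr \times (0,\infty), \qquad \pi(q) = (\operatorname{Re}(q),\,|\operatorname{Im}(q)|),
\]
which collapses each sphere $x+y\s$ to the single point $(x,y)$.

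Closedness of $D_f$ in $\Omega \setminus \rr$ is then immediate: the intersection $D_f \cap (\Omega \setminus \rr)$ is the preimage under $\pi$ of the zero set of $c$, a continuous function, and hence is relatively closed.

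For the empty-interior assertion I would argue by contradiction. If $q_0$ were an interior point of $D_f$, then since $D_f \cap \rr = \emptyset$ one can pick $\varepsilon > 0$ with $B(q_0,\varepsilon) \subset D_f$ and $B(q_0,\varepsilon) \cap \rr = \emptyset$. The symmetric hull $U = \pi^{-1}(\pi(B(q_0,\varepsilon)))$ is then open (because $\pi$ is open), axially symmetric by construction, contained in $\Omega$ (since $\Omega$ is symmetric and each sphere in $U$ meets $B(q_0,\varepsilon) \subset \Omega$), and contained in $D_f$ for the same reason. Since $f$ is non-constant, Theorem \ref{circularopen} forces $f(U)$ to be open in $\hh$. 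However, $c \equiv 0$ on $\pi(U)$, so $f(q) = b(\pi(q))$ on $U$, and $f(U) = b(\pi(U))$ is the smooth image of an open subset of $\rr^2$ into $\hh \cong \rr^4$; such a set has empty interior, by Sard's theorem or a direct dimension count. This contradiction finishes the argument. The one step demanding care is the construction of the symmetric open set $U \subset D_f$ from a single interior point; once that reduction is in place, everything reduces to the dimension gap between $\rr^2$ and $\rr^4$.
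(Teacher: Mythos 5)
Your proposal is correct and takes essentially the same route as the paper: closedness follows from the continuity of $c$ in the representation $f(x+yI)=b(x,y)+Ic(x,y)$, and the empty-interior claim is obtained by producing a symmetric open subset of $D_f$, invoking Theorem \ref{circularopen}, and observing that $f$ there equals $b$ composed with the projection, whose image (a smooth image of a planar set in $\hh\cong\rr^4$) cannot be open. The only difference is cosmetic: you construct the symmetric open set as the symmetric hull $\pi^{-1}(\pi(B(q_0,\varepsilon)))$ of a ball, whereas the paper simply takes the (automatically symmetric) interior of the union $\Gamma$ of the degenerate spheres.
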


\begin{proof}  As we saw in Theorem \ref{formula} and in the following discussion, there exist $b,c \in C^{\infty}$ such that $f(x+yI) = b(x,y)+Ic(x,y)$. Clearly, the union $\Gamma$ of the 2-spheres (or real singletons) $x+y\s$ such that $c(x,y)=0$ is closed in $\Omega$. 

If the interior of $\Gamma$ were not empty, then it would be a symmetric open set having non-open image: indeed, $f(x+yI) = b(x,y)$ for all $x+yI \in \Gamma$ and the image through $b$ of a non-empty subset of $\rr^2$ cannot be open in $\hh$. By Theorem \ref{circularopen}, $f$ would have to be constant, a contradiction with the hypothesis.

Finally, we observe that $D_{f}= \Gamma \setminus \rr$, so that $D_f$ must be closed in $\Omega\setminus \rr$ and have empty interior.
\end{proof}

We are now ready for the main result of this section.

\begin{theorem}[Open Mapping Theorem]\label{open}
Let $f$ be a regular function on a symmetric slice domain $\Omega$ and let $D_f$ be its degenerate set. Then $f:\Omega \setminus \overline{D_f} \to \hh$ is open.
\end{theorem}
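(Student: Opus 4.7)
The strategy is to adapt the proof of Theorem \ref{circularopen}, where symmetry of the open set $U$ guaranteed that the sphere $S$ through a preimage $q_0$ was entirely contained in $U$. Here $V$ need not be symmetric, but the assumption $q_0 \notin \overline{D_f}$ will play the analogous role. Fix $q_0 \in V$, set $p_0 = f(q_0)$, write $q_0 = x_0 + y_0 I_0$, and let $S = x_0 + y_0 \s$. If $y_0 = 0$, a ball $B(q_0, \delta) \subseteq V$ is symmetric, so Theorem \ref{circularopen} applied to $B(q_0,\delta)$ gives the conclusion directly; hence I assume $y_0 > 0$. I may also assume $f$ is not constant on $\Omega$, since otherwise every non-real sphere lies in $D_f$ and, $\Omega$ being a symmetric slice domain, $\overline{D_f} = \Omega$, so $V = \emptyset$.

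First I would verify that $\overline{D_f} \cap (\hh \setminus \rr)$ is itself a union of $2$-spheres: if $x_n + y_n I_n \in D_f$ converges to $x+yI$ with $y > 0$, then for every $J \in \s$ the sequence $x_n + y_n J$ converges to $x+yJ$, so the whole sphere $x+y\s$ lies in $\overline{D_f}$. Consequently $q_0 \notin \overline{D_f}$ forces $S \cap \overline{D_f} = \emptyset$. Moreover, by Theorem \ref{formula} the restriction $f_{|_S}$ is the affine map $I \mapsto b(x_0,y_0) + I c(x_0,y_0)$, and since $S \not\subseteq D_f$ we have $c(x_0,y_0)\neq 0$, so $q_0$ is the unique zero of $f - p_0$ on $S$; by Theorem \ref{structurethm}, $q_0$ is then an isolated point of the zero set of $f - p_0$ in $\Omega$.

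Using that $S$ is compact and disjoint from the closed sets $\hh \setminus \Omega$ and $\overline{D_f}$, together with the isolation of $q_0$ (a standard compactness argument shows that any other zeros of $f - p_0$ clustering against $S$ would admit a subsequential limit on $S$, necessarily $q_0$, contradicting isolation), I can choose $r > 0$ so that $\overline{T(S,r)} \subseteq \Omega \setminus \overline{D_f}$ and $q_0$ is the only zero of $f - p_0$ in $\overline{T(S,r)}$. Then I reproduce the end of the proof of Theorem \ref{circularopen}: pick $\varepsilon > 0$ with $|f - p_0| \geq 3\varepsilon$ on $\partial T(S,r)$; for $p \in B(p_0,\varepsilon)$ the estimates $|f - p| \geq 2\varepsilon$ on $\partial T(S,r)$ and $|f(q_0) - p| \leq \varepsilon$ show that the minimum of $|f - p|$ on $\overline{T(S,r)}$ is attained at an interior point $q_p \in T(S,r)$, and the Minimum Modulus Principle (Theorem \ref{minimum}) applied to the regular function $f - p$ on the symmetric slice domain $\Omega$ yields $f(q_p) = p$.

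The genuinely new step, absent from Theorem \ref{circularopen}, is to ensure $q_p \in V$ rather than merely $q_p \in T(S,r)$. This follows from a final compactness argument: if $p_n \to p_0$ and $q_{p_n} \in T(S,r)$ are any zeros of $f - p_n$, every subsequential limit in $\overline{T(S,r)}$ is a zero of $f - p_0$ there, hence equals $q_0$; so $q_{p_n} \to q_0$. Choosing $\delta > 0$ with $B(q_0,\delta) \subseteq V$, and then $\varepsilon' \leq \varepsilon$ small enough that $|p - p_0| < \varepsilon'$ forces every zero of $f - p$ in $T(S,r)$ to lie in $B(q_0,\delta)$, one concludes $B(p_0,\varepsilon') \subseteq f(V)$, which is the required openness at $p_0$. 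The principal obstacle is exactly this last bookkeeping: locating the Minimum-Modulus zero inside the possibly non-symmetric set $V$, whereas in Theorem \ref{circularopen} the symmetry of $U$ made this automatic.
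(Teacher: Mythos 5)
Your proof is correct, and its engine is the same as the paper's (the $3\varepsilon$ boundary estimate plus the Minimum Modulus Principle applied to $f-p$), but you take a detour the paper avoids. The paper works directly inside a small ball $\overline{B(q_0,r)}\subseteq U$: since $q_0\notin \overline{D_f}$, Theorem \ref{structurethm} makes $q_0$ an isolated zero of $f-p_0$, the minimum of $|f-p|$ on $\overline{B(q_0,r)}$ is attained in the interior, and Theorem \ref{minimum} (which requires symmetry only of the \emph{domain of regularity} $\Omega$, not of the set in which the local minimum is located) forces $f-p$ to vanish there --- automatically inside $U$. The ``principal obstacle'' you identify, namely relocating the zero produced in the symmetric tube $T(S,r)$ back into the possibly non-symmetric $V$, is therefore an artifact of your choice to imitate Theorem \ref{circularopen} by surrounding the whole sphere $S$; it simply does not arise if you take the compact set to be a ball contained in $V$ from the start. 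That said, your extra steps (the closure $\overline{D_f}\setminus\rr$ being a union of spheres, uniqueness of the zero of $f-p_0$ on $S$ via $c(x_0,y_0)\neq 0$, and the compactness argument forcing $q_{p_n}\to q_0$) are all sound, so you lose nothing but economy; your version does yield the small bonus that for $p$ near $p_0$ \emph{all} preimages in the tube cluster at $q_0$, a continuity-of-roots statement the paper does not need.
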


\begin{proof}
Let $U$ be an open subset of $\Omega\setminus \overline{D_f}$ and let $p_0 \in f(U)$. We will show that the image $f(U)$ contains a ball $B(p_0,\varepsilon)$ with $\varepsilon > 0$. Choose $q_0 \in U$ such that $f(q_0) = p_0$. Since $U$ does not intersect any degenerate sphere, by Theorem \ref{structurethm} the point $q_0$ must be an isolated zero of the function $f(q) - p_0$. We may thus choose $r>0$ such that $\overline{B(q_0,r)} \subseteq U$ and $f(q) - p_0 \neq 0$ for all $q \in \overline{B(q_0,r)} \setminus \{ q_0 \}$. Let $\varepsilon >0$ be such that $|f(q) - p_0| \geq 3 \varepsilon$ for all $q$ such that $|q-q_0| = r$. For all $p \in B(p_0, \varepsilon)$ we get 
$$|f(q) -p| \geq |f(q) -p_0| - |p-p_0| \geq 3 \varepsilon -\varepsilon = 2 \varepsilon$$ 
for $|q-q_0| = r$, while 
$$|f(q_0)-p| = |p_0-p| \leq \varepsilon.$$ 
Thus $|f(q_0)-p| < \min_{|q-q_0| = r} |f(q) -p|$ and $|f(q) -p|$ must have a local minimum point in $B(q_0,r)$. Since $f(q) -p$ is not constant, it must vanish at the same point by Theorem \ref{minimum}. Hence there exists $q \in B(q_0,r) \subseteq U$ such that $f(q) = p$.
\end{proof}

As in \cite{open}, the non-degeneracy hypothesis cannot be removed.

\begin{ex} 
The 2-sphere $\s$ of imaginary units is degenerate for the quaternionic polynomial $f(q) = q^{-2}+1$, since $f(I) = 0$ for all $I \in \s$. We can easily prove that $f : \hh \setminus \{0\} \to \hh$ is not open by choosing an $I\in \s$ and observing that the image of the open ball $B=B(I, 1/2)$ centered at $I$ is not open. Indeed, $0\in f(B)$ and $f(B) \cap L_{J} \subseteq \rr$ when $J\in \s$ is orthogonal to $I$.
\end{ex}

This phenomenon does not arise in the complex case. Checking the proofs shows that the fact of the matter is that a regular quaternionic function may vanish on a whole 2-sphere while the zero set of a non-constant holomorphic function is discrete.


\bibliography{ZerosOpen}

\bibliographystyle{abbrv}


\end{document}